\providecommand{\U}[1]{\protect\rule{.1in}{.1in}}
\newtheorem{theorem}{Theorem}[section]
\newtheorem{proposition}[theorem]{Proposition}
\newtheorem{corollary}[theorem]{Corollary}
\newtheorem{example}[theorem]{Example}
\newtheorem{examples}[theorem]{Examples}
\newtheorem{remark}[theorem]{Remark}
\newtheorem{lemma}[theorem]{Lemma}
\newtheorem{final remark}[theorem]{Final Remark}
\newtheorem{definition}[theorem]{Definition}
\newcommand{\cvf}{\overset{\omega}{\rightarrow}}
\newcommand {\cvfe} {\overset{\omega^\ast}{\rightarrow}}
\newcommand {\R}{\mathbb{R}}
\newcommand {\K} {\mathbb{K}}
\newcommand {\N} {\mathbb{N}}
\newcommand{\norma}[1]{\| #1 \|}
\newcommand{\sol}[1]{\text{sol}(#1)}
\begin{document}

\title{Disjoint Dunford-Pettis-type properties in Banach lattices}
\author{Geraldo Botelho\thanks{Supported by Fapemig grants PPM-00450-17,  RED-00133-21 and APQ-01853-23.}\,, Jos\'e Lucas P. Luiz\thanks{{Supported by Fapemig grant APQ-01853-23}}\, and  Vinícius C. C. Miranda\thanks{Supported by CNPq Grant 150894/2022-8 and Fapemig grant APQ-01853-23\newline 2020 Mathematics Subject Classification: 46B42, 46B40, 46G25, 46M05.\newline Keywords: Banach lattices, Dunford-Pettis properties, regular homogeneous polynomials. }}
\date{}
\maketitle

\begin{abstract}
New characterizations of the disjoint Dunford-Pettis property of order $p$ (disjoint $DPP_p$) are proved and applied to show that a Banach lattice of cotype $p$ has the disjoint $DPP_p$ whenever its dual has this property. The disjoint Dunford-Pettis$^*$ property of order $p$ (disjoint $DP^*P_p$) is thoroughly investigated. Close connections with the positive Schur property of order $p$, with the disjoint $DPP_p$, with the $p$-weak $DP^*$ property and with the positive $DP^*$ property of order $p$ are established. In a final section we study the polynomial versions of the disjoint $DPP_p$ and of the disjoint $DP^*P_p$.
\end{abstract}

\section{Introduction}

Throughout this paper, $X$ and $Y$ denote Banach spaces,  $E$ and $F$ denote Banach lattices, $B_X$ and $S_X$ denote the closed unit ball and the unit sphere of $X$, and $p$ is a real number not smaller than $1$.

 Recall that a subset $A$ of a Banach space $X$ is a Dunford-Pettis set (respectively, a limited set) if every weakly null (respectively, weak* null) sequence in the dual $X^\ast$ of $X$ converges uniformly to zero on $A$. A Banach space $X$ has the (classical) Dunford-Pettis property ($DPP$) if weakly compact subsets of $X$ are Dunford-Pettis (see \cite[p.\,36]{andrews}), and $X$ has the Dunford-Pettis$^*$ property ($DP^*P$) if weakly compact subsets of $X$ are limited (see \cite[Definition 1.1]{cargalou}). Lattice counterparts of these concepts have been studied recently: a bounded subset $A$ of a Banach lattice $E$ is almost Dunford-Pettis (respectively, almost limited) if every disjoint weakly null (respectively, disjoint weak$^*$ null) sequence in $E^\ast$ converges uniformly to zero on $A$ (see \cite{bouras}, respectively, \cite{chen}). The Banach lattice $E$ has the weak Dunford-Pettis property ($wDPP$) if weakly compact subsets of $X$ are almost Dunford-Pettis, and $E$ has the weak Dunford-Pettis$^*$ property ($wDP^*P$) if weakly compact subsets of $E$ are almost limited (see \cite[Theorem 2.7]{bouras} and \cite[Definition 3.1]{chen}, respectively).

   According to Alikhani \cite[Definition 4.1]{ali}, a Banach lattice $E$ has the disjoint Dunford-Pettis property or order $p$ (disjoint $DPP_p$), if every disjoint weakly $p$-summable sequence is Dunford-Pettis. In Section 2 of this paper we continue the study of the disjoint $DPP_p$. First we give some new characterizations of this property that shall be useful later. Then we show that, unlike the cases of the $DPP$ and of the $wDPP$, it is not true that $E$ has the disjoint $DPP_p$ whenever $E^*$ has this property. In the search for an additional condition under which this implication holds true, in Theorem \ref{lb5r} we prove that if $E^*$ has the disjoint $DPP_p$ and $E$ has cotype $p$, then $E$ has the disjoint $DPP_p$.

  In the same way that the $DP^*P$ is defined by replacing Dunford-Pettis sets in the $DPP$ with limited sets, in Section 3 we introduce the disjoint Dunford-Pettis$^*$ property of order $p$ (disjoint $DP^*P_p$) by making the same replacement in the definition of the disjont $DPP_p$. Illustrative examples are given and this new property is compared to several related properties already studied. Denoting the positive Schur property of order $p$ introduced in \cite{zeefou} by  $SP_p^+$, in particular we see that the following implications are always true and that their converses do not hold in general:
  $$ SP_p^+ \Longrightarrow \text{ disjoint } DP^*P_p \Longrightarrow  \text{ disjoint } DPP_p. $$ Some effort is devoted to find additional conditions under which the converse implications hold. As to the converse of the first implication, the order continuity of the norm does the job (cf. Proposition \ref{hgty}), whereas the weak Grothendieck property does the job for the second implication in $\sigma$-Dedekind complete Banach lattices (cf. Corollary \ref{cor1}). Strong connections of the disjoint $DP^*P_p$ with the $p$-weak $DP^*$ property from \cite{ardakani} and with the positive $DP^*$ property of order $p$ from \cite{ardakani2} are also established.

  The interest in studying polynomial versions of geometric and topological properties of Banach spaces has increased in the last two decades. For instance,  polynomial versions of the $DPP$ and of the $DP^\ast P$ can be found in \cite{cargalou, farjohn, pel, ryan}. In the context of Banach lattices, the positive polynomial Schur property was investigated in \cite{jlucaspams} and polynomials versions of the $wDPP$ and of the $wDP^\ast P$ appeared in \cite{wangshibu}. In Section 4 we investigate the polynomial behavior of the disjoint $DPP_p$ and of the disjoint $DP^*P_p$. We show that the characterizations of these properties we proved in the previous sections hold true if linear operators/functionals are replaced with homogeneous polynomials. In other words, we prove that the polynomial versions of these properties coincide with themselves.%  The result is that these two properties can be characterized    , we give the polynomial versions of the properties studied in Section 2.

We refer the reader to \cite{alip, meyer} for background on Banach lattices, to \cite{fabian} for Banach space theory,  and to \cite{dineen} for polynomials on Banach spaces.

Whenever we say that a sequence $(x_n)_n$ has a property defined for sets we mean that the set $\{x_n: n \in \N\}$ has the property.

We finish this introduction stating some useful and inspiring characterizations of the properties considered above. \\
%In the context of Banach lattices, two further classes of sets were introduced by considering disjoint sequences. Following \cite{bouras} (resp. \cite{chen}), a bounded subset $A$ of a Banach lattice $E$ is said to be almost Dunford-Pettis (resp. almost limited) if every disjoint weakly null (resp. disjoint weak* null) sequence in $E^\ast$ converges uniformly to zero on $A$. Throughout this paper, we say that $(x_n)_n$ is a (an almost) Dunford-Pettis/limited sequence, if $\{ x_n : n \in \N \}$ is a (an almost) Dunford-Pettis/limited set.
%In the following, we list four properties that relates weakly compact subsets and the classes of sets mentioned in the former paragraph:
\noindent$\bullet$ A Banach space $X$ has the $DPP$ if and only if $x_n^\ast(x_n)\longrightarrow 0$ for every weakly null sequence $(x_n)_n$ in $X$ and every weakly null sequence $(x_n^\ast)_n$ in $\subset X^\ast$ \cite[Proposition 2]{castgon}.\\
\noindent$\bullet$ A Banach space $X$ has the $DP^*P$ if and only if $x_n^\ast(x_n)\longrightarrow 0$ for every weakly null sequence $(x_n)_n$ in $ X$ and every weak* null sequence $(x_n^\ast)_n$ in $X^\ast$. \cite[p.\,206]{cargalou}.\\
\noindent$\bullet$ A Banach lattice $E$ has the $wDPP$ if and only if $x_n^\ast(x_n)\longrightarrow 0$ for every weakly null sequence $(x_n)_n$ in $ E$ and every disjoint weakly null sequence $(x_n^\ast)_n$ in $ E^\ast$ \cite[Theorem 2.7]{bouras}.\\
\noindent$\bullet$ A Banach lattice $E$ has the $wDP^*P$ if and only if $x_n^\ast(x_n)\longrightarrow 0$ for every weakly null sequence $(x_n)_n$ in $E$ and every disjoint weak* null sequence $(x_n^\ast)_n$ in $ E^\ast$ \cite[p.\,551]{chen}.

\section{The disjoint Dunford-Pettis property of order $p$}

Extending the linear concept introduced in \cite{zeefou}, a (linear or nonlinear) map from a Banach lattice to a Banach space is said to be disjoint $p$-convergent if it sends disjoint weakly $p$-summable sequences to norm null sequences. Recent developments can be found in \cite{ali, ardakani2, ardakani, botgarmir}. The positive Schur property of order $p$ ($SP_p^+$) and the disjoint $DPP_p$ can be characterized by means of disjoint $p$-convergent operators as follows:\\
\noindent$\bullet$ A Banach lattice has the $SP_p^+$ if and only if the identity operator is disjoint $p$-convergent, that is, if disjoint weakly $p$-summable sequences are norm null \cite[Remark 4.7]{zeefou}.\\
\noindent$\bullet$ A Banach lattice $E$ has the disjoint $DPP_p$ if and only if every weakly compact operator $T\colon E \to c_0$ is disjoint $p$-convergent %, meaning that $T$ sends disjoint weakly $p$-summable sequences to norm null sequences (see
 \cite[Theorem 4.2]{ali}.

\begin{examples}\rm \label{obs1}
 (1) \rm It is easy to see that every Banach lattice $E$ with the $DPP$ or with the  $SP_p^+$ has the disjoint $DPP_p$. \\
 (2) The Banach lattice $\ell_2$ does not have the disjoint $DPP_2$.  Indeed, the sequence $(e_n)_n$ of the canonical unit vectors is  disjoint and weakly $2$-summable, but $\{ e_n : n \in \N \}$ is not a Dunford-Pettis subset of $\ell_2$. \\
 (3) Let $1 < q \leq p$ be given. Since weakly $q$-summable sequences are weakly $p$-summable, every Banach lattice with the disjoint $DPP_p$ has the the disjoint $DPP_q$ as well. %Indeed, if $(x_n)_n$ is a disjoint weakly $q$-summable sequence, then it is disjoint weakly $p$-summable, and then $\{ x_n : n \in \N \}$ is a Dunford-Pettis subset of $E$. Therefore $E$ has the disjoint $DPP_q$.
 Let us see that the converse is not true. As we saw above, $\ell_2$ fails the disjoint $DPP_2$, but $\ell_2$ has the disjoint $DPP_1$ by (1) because it has the $SP_1^+$ (see \cite[Example 3.9(ii)]{zeefou}). \\
 (4) The first example of a Banach space with the $DPP$ whose dual fails the $DPP$ was $E = \ell_1(\bigoplus_{n=1}^\infty \ell_2^n)$ (see \cite{stegall}). In the Banach lattice context, it was proved in \cite{loumir} that $E^\ast = \ell_\infty(\bigoplus_{n=1}^\infty \ell_2^n)$ even fails the $wDPP$. Moreover, it is known that $E^\ast % = \ell_\infty(\bigoplus_{n=1}^\infty \ell_2^n)
 $ contains a complemented  lattice copy of $\ell_2$ (see the proof of \cite[Proposition 2.4]{loumir}), hence, by \cite[Corollary 4.3(i)]{ali}, we conclude that
$E^\ast %= \ell_\infty(\bigoplus_{n=1}^\infty \ell_2^n)
$ fails the $DPP_2$.
\end{examples}

It was proved in \cite[Proposition 5.5]{galmir} that every Dunford-Pettis disjoint sequence in a Banach lattice is almost Dunford-Pettis. %if a disjoint sequence $(x_n)_n$ in a Banach lattice $E$ is almost Dunford-Pettis, then it is Dunford-Pettis.
Therefore, $E$ has the disjoint $DPP_p$ if and only if every disjoint weakly $p$-summable sequence in $E$ is almost Dunford-Pettis. Next we establish some further characterizations that shall be useful later a couple of times. We include the aforementioned characterization just for the record.

%%\noindent \textbf{Question 1: } If $E^\ast$ has the $DPP_p$, does $E$ has the $DPP_p$? \textcolor{red}{Ver Example \ref{ex1}.}

%\smallskip

%%\noindent \textbf{Question 2: } It is well known that $E = \ell_1(\ell_2^n)$ does has the $DPP$ (hence the $wDPP$), but $E^\ast$ does not have the $DPP$ or the $wDPP$ (see \cite{loumir}). Does $E^\ast$ has the disjoint $DPP_p$? for some $1 \leq p < \infty$? \textcolor{red}{Ver Example \ref{ex2}.}

%%\smallskip

%%It was proved in \cite[Proposition 5.5]{galmir} that if a disjoint sequence $(x_n)_n$ in a Banach lattice $E$ is almost Dunford-Pettis, then it is Dunford-Pettis. Therefore, $E$ has the disjoint $DPP_p$ if and only if every disjoint weakly $p$-summable sequence in $E$ is almost Dunford-Pettis.

\begin{theorem} \label{teorema1}
    The following are equivalent for a Banach lattice $E$. \\
    {\rm (a)} $E$ has the disjoint $DPP_p$. \\
    {\rm (b)} Every disjoint weakly $p$-summable sequence in $E$ is almost Dunford-Pettis. \\
    {\rm (c)} For every disjoint weakly $p$-summable sequence $(x_n)_n$ in $E$ and every disjoint weakly null sequence $(x_n^\ast)_n$ in $E^\ast$, we have  $x_n^\ast(x_n) \longrightarrow 0$. \\
    {\rm (d)} For every disjoint weakly $p$-summable sequence $(x_n)_n$ in $ E$ and every positive disjoint weakly null sequence $(x_n^\ast)_n$ in $ E^\ast$, we have $x_n^\ast(x_n) \longrightarrow 0$.\\
    {\rm (e)} For every disjoint weakly $p$-summable sequence $(x_n)_n$ in $ E$ and every positive weakly null sequence $(x_n^\ast)_n$ in $E^\ast$, we have $x_n^\ast(x_n) \longrightarrow 0$.
\end{theorem}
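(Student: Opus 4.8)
The plan is to establish the single cycle of implications
\[ \text{(a)} \Rightarrow \text{(e)} \Rightarrow \text{(d)} \Rightarrow \text{(c)} \Rightarrow \text{(b)} \Rightarrow \text{(a)}, \]
which makes all five statements equivalent. The last arrow (b) $\Rightarrow$ (a) is exactly the equivalence recorded immediately before the statement (via \cite[Proposition 5.5]{galmir}), so nothing new is required there. The arrow (e) $\Rightarrow$ (d) is trivial, since a positive disjoint weakly null sequence is in particular a positive weakly null sequence.

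For (a) $\Rightarrow$ (e), let $(x_n)_n$ be disjoint weakly $p$-summable and let $(x_n^\ast)_n$ be positive weakly null (positivity will not even be used). By (a), the set $\{x_n : n \in \N\}$ is a Dunford-Pettis set, so the weakly null sequence $(x_n^\ast)_n$ converges uniformly to zero on it; in particular $|x_n^\ast(x_n)| \le \sup_m |x_n^\ast(x_m)| \to 0$.

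For (d) $\Rightarrow$ (c), take a disjoint weakly $p$-summable sequence $(x_n)_n$ and an arbitrary disjoint weakly null sequence $(x_n^\ast)_n$ in $E^\ast$, and write $x_n^\ast = (x_n^\ast)^+ - (x_n^\ast)^-$. Both parts are positive, and they are disjoint, since $(x_n^\ast)^+ \wedge (x_m^\ast)^+ \le |x_n^\ast| \wedge |x_m^\ast| = 0$ for $n \ne m$ (and likewise for the negative parts). The only point needing care is that they are weakly null, which I would deduce from the standard fact that the absolute values of a disjoint weakly null sequence in a Banach lattice form a weakly null sequence (see, e.g., \cite{meyer}): thus $(|x_n^\ast|)_n$ is weakly null, and so are $(x_n^\ast)^\pm = \tfrac12\bigl(|x_n^\ast| \pm x_n^\ast\bigr)$. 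Applying (d) to $((x_n^\ast)^+)_n$ and to $((x_n^\ast)^-)_n$ and subtracting gives $x_n^\ast(x_n) \to 0$.

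The crux is (c) $\Rightarrow$ (b). Let $(x_n)_n$ be disjoint weakly $p$-summable and suppose it fails to be almost Dunford-Pettis: there are a disjoint weakly null sequence $(y_k^\ast)_k$ in $E^\ast$ and some $\varepsilon > 0$ with $\sup_m |y_k^\ast(x_m)| > \varepsilon$ for infinitely many $k$. Passing to those $k$ and choosing, for each of them, an index $m_k$ with $|y_k^\ast(x_{m_k})| > \varepsilon$, I observe that no value can be repeated infinitely often among the $m_k$: if $m_k = m_0$ for infinitely many $k$, then $y_k^\ast(x_{m_0}) \to 0$ (because $(y_k^\ast)_k$ is weakly null) would contradict $|y_k^\ast(x_{m_0})| > \varepsilon$. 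Hence, after passing to a further subsequence and relabelling, $(m_k)_k$ is strictly increasing. Then $(x_{m_k})_k$ is a subsequence of $(x_n)_n$, so it is again disjoint and weakly $p$-summable, $(y_k^\ast)_k$ remains disjoint weakly null, and $|y_k^\ast(x_{m_k})| > \varepsilon$ for all $k$ contradicts (c). This diagonal extraction, which upgrades the pointwise diagonal condition in (c) to the uniform convergence on the set required by (b), is the main obstacle; the other passages between positive and general, and between disjoint and arbitrary, sequences are routine once the quoted lattice fact is available.
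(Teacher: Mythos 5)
Your proof is correct, and it is organized differently from the paper's. The paper does not use a single cycle: it gets (a) $\Leftrightarrow$ (b) from \cite[Proposition 5.5]{galmir}, quotes \cite[Theorem 4.2]{ali} for (a) $\Rightarrow$ (e), proves (b) $\Rightarrow$ (c) by the trivial estimate $|x_n^\ast(x_n)| \leq \sup_k |x_n^\ast(x_k)|$, and does all the real work in (d) $\Rightarrow$ (b), a contradiction argument with a subsequence extraction. Your version redistributes that work. First, you prove (a) $\Rightarrow$ (e) directly from the definition of a Dunford--Pettis set, which is a genuine (small) simplification: it avoids the external citation, and you correctly note positivity is irrelevant there. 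Second, your (d) $\Rightarrow$ (c) via the decomposition $x_n^\ast = (x_n^\ast)^+ - (x_n^\ast)^-$ does not appear in the paper; it is sound, and the one nontrivial point — that $(|x_n^\ast|)_n$ is weakly null whenever $(x_n^\ast)_n$ is disjoint and weakly null, whence $(x_n^\ast)^\pm = \frac{1}{2}(|x_n^\ast| \pm x_n^\ast)$ are weakly null — is exactly the lattice fact the paper itself invokes, cited there as \cite[Proposition 1.3]{wnukdual}; your vague pointer to \cite{meyer} should be replaced by that reference. Third, your (c) $\Rightarrow$ (b) extraction is the mirror image of the paper's (d) $\Rightarrow$ (b): because the paper must land in the positive disjoint hypothesis (d), it passes to the moduli $(|x_{n_k}|)_k$ and $(|x_k^\ast|)_k$, whereas you land in the weaker-looking hypothesis (c), need no moduli, and instead spell out in full the ``no index repeats infinitely often'' argument that the paper compresses into a ``without loss of generality''. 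One caveat on your closing arrow (b) $\Rightarrow$ (a): the direction of \cite[Proposition 5.5]{galmir} you actually need is the nontrivial one — that disjoint almost Dunford--Pettis sequences are Dunford--Pettis — since every Dunford--Pettis set is trivially almost Dunford--Pettis; but the paper leans on the same citation for its (a) $\Leftrightarrow$ (b), so you are on equal footing. Net effect: the same two key ingredients (the modulus fact and the diagonal extraction) arranged into a leaner, more self-contained implication scheme.
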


\begin{proof}
    As mentioned above, (a) $\Leftrightarrow$ (b) follows from \cite[Proposition 5.5]{galmir}. The implications (c) $\Rightarrow$ (d) and (e) $\Rightarrow$ (d) are immediate. The implication (a) $ \Rightarrow$ (e) follows from \cite[Theorem 4.2]{ali}. We just have to check (b) $\Rightarrow$ (c) and (d) $\Rightarrow$ (b).

    (b) $\Rightarrow$ (c) Let $(x_n)_n$ be a disjoint weakly $p$-summable sequence in $E$ and let $(x_n^\ast)_n$ be a disjoint weakly null sequence in $E^\ast$. By the assumption the set $ \{ x_k : k \in \N \}$ is almost Dunford-Pettis, so
    $$ |x_n^\ast(x_n)| \leq \sup_{k \in \N} |x_n^\ast (x_k)| \longrightarrow 0. $$

    (d) $\Rightarrow$ (b) Let $(x_n)_n$ be a disjoint weakly $p$-summable sequence in $E$ and let $A = \{ x_n : n \in \N \}$.
    Suppose that $A$ is not an almost Dunford-Pettis set. In this case there exist
    $\varepsilon > 0$, a sequence $(y_{k})_k \subset A$ and a disjoint weakly null sequence $(x_k^\ast)_k \subset E^\ast$ such that $ |x_k^\ast| (|y_{k}|) \geq |x_k^\ast (y_{k})| \geq \varepsilon$ for every $k \in \N$.
    Since finite sets are compact, hence almost Dunford-Pettis, $\{ y_{k} : k \in \N \}$ cannot be finite, so we can assume without loss of generality that $(y_{k})_k$ is a subsequence of $(x_n)_n$, say $(y_{k})_k= (x_{n_k})_k$. However, since $(|x_{n_k}|)_k$ is a positive disjoint weakly null sequence (see \cite[Proposition 1.3]{wnukdual}), it follows that $|x_k ^\ast|(|y_{k}|) =|x_k ^\ast|(|x_{n_k}|) \longrightarrow 0$, a contradiction that completes the proof.
   %%% \textcolor{blue}{Since finite sets are Dunford-Pettis, $\{ x_{k_n}: n \in \N \}$ and so we can assume without loss of generality that $(x_{k_n})_n$ is a subsequence of $(x_k)_k$. (TÁ MEIO CONFUSO ESSA DEM. PRECISAMOS REVER!)}  Nevertheless, since $(|x_{k_n}|)_n$ is a disjoint weakly $p$-summable sequence (see \cite[Proposition 2.2]{zeefou}) and since $(|x_n^\ast|)_n$ is a positive disjoint weakly null sequence (see \cite[Proposition 1.3]{wnukdual}), we would have that $|x_n^\ast|(|x_{k_n}|) \to 0$, a contradiction.
  %   and the implication (e) $\Rightarrow$ (d) is immediate.
\end{proof}

It is well known that if $X^\ast$ has the $DPP$ (resp. $E^\ast$ has the $wDPP$), then $X$ has the $DPP$ (resp. $E$  has the $wDPP$) (see \cite[Corollary 3]{castgon}, resp. \cite[Corollary 2.11]{boumoussa}). In the following example we show that a related result does hold in general for the disjoint $DPP_p$.

\begin{example} \label{exemplo1}\rm
On the one hand,    the Banach lattice $\ell_3$ does not have the disjoint $DPP_{3/2}$ because $(e_n)_n$ is a disjoint weakly $\frac{3}{2}$-summable sequence and the sequence $(e_n^\ast)_n$ of coordinate functionals is a disjoint and weakly null sequence $\ell_3^\ast$ such that $e_n^\ast(e_n) = 1$ for every $n \in \N$. On the other hand, $\ell_{3}^\ast = \ell_{3/2}$ has the disjoint $DPP_{3/2}$ by Example \ref{obs1}(1) because it has the $SP_{3/2}$ (see \cite[Example 3.9(i)]{zeefou}).
\end{example}

Now it is a natural question to ask what condition should be added to the disjoint $DPP_p$ of $E^*$ in order to ensure that $E$ has the disjoint $DPP_p$.
%Nevertheless, whenever $E^\ast$ has the disjoint $DPP_p$ and type $p^\ast$, then $E$ has the disjoint $DPP_p$. Before we prove this fact,
Recall that a Banach space $X$ has {\it cotype} $p \geq 2$ if there is a constant $C > 0$ such that for every finite subset $\{x_1, \dots, x_n \} \subset X$,
$$ \left ( \sum_{k=1}^n \norma{x_k}^p \right )^{1/p}\leq C \cdot \left ( \int_0^1 \left\|\sum_{k=1}^n r_k(t) x_k\right\|^2 \right )^{1/2} , $$
where $(r_k)_k$ denotes the Rademacher sequence (see \cite[p.\,217]{diestel}).% or \cite[Definition 6.2.10]{albiac}).
%A Banach space $X$ is said to have {\it nontrivial type} if it has some type $1 < q \leq 2$ \textcolor{blue}{(Precisa dessa defini\c c\~ao? Acho que n\~ao)}.
%In \cite[Lemma 3.4]{fouzee}, Fourie and Zeekoei proved that in a Banach lattice $E$ with type $1< q \leq 2$, every disjoint sequence in the solid hull of a relatively weakly compact subset of $E$ is weakly $p$-summable for all $p \geq q^\ast$. We will use this result to prove the following:

\begin{proposition}\label{lb5r}
    If $E$ has cotype $p$ and $E^\ast$ has the disjoint $DPP_p$, then $E$ has the disjoint $DPP_p$.
\end{proposition}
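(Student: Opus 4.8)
The plan is to verify condition (c) of Theorem \ref{teorema1} for $E$. So let $(x_n)_n$ be a disjoint weakly $p$-summable sequence in $E$ and let $(x_n^\ast)_n$ be a disjoint weakly null sequence in $E^\ast$; the goal is to show that $x_n^\ast(x_n)\to 0$. Writing $J\colon E\to E^{\ast\ast}$ for the canonical embedding, one has $x_n^\ast(x_n)=(Jx_n)(x_n^\ast)$, so the idea is to reinterpret these scalars inside the duality $\langle E^{\ast\ast},E^\ast\rangle$ and deduce their convergence to zero from the disjoint $DPP_p$ of the Banach lattice $E^\ast$, applied through \emph{its own} condition (c): I intend to feed the pair $\big((x_n^\ast)_n,(Jx_n)_n\big)$ into that condition, with $(x_n^\ast)_n\subset E^\ast$ playing the role of the weakly $p$-summable sequence and $(Jx_n)_n\subset E^{\ast\ast}=(E^\ast)^\ast$ the role of the weakly null one.

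For this to be legitimate I need two inputs: that $(Jx_n)_n$ is a disjoint weakly null sequence in $E^{\ast\ast}$, and that $(x_n^\ast)_n$ is a disjoint weakly $p$-summable sequence in $E^\ast$. The first is routine: $J$ is an isometric lattice homomorphism, hence maps disjoint vectors to disjoint vectors, and being bounded and linear it is weak-to-weak continuous, so it carries the disjoint weakly null sequence $(x_n)_n$ to a disjoint weakly null sequence in $E^{\ast\ast}$ (here only the weak nullness of $(x_n)_n$ is used, not its full $p$-summability).

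The substantial point — and where the cotype hypothesis enters — is upgrading the merely disjoint and bounded sequence $(x_n^\ast)_n$ to a weakly $p$-summable one in $E^\ast$. My plan is the chain: cotype $p$ of $E$ $\Rightarrow$ $E$ satisfies a lower $p$-estimate $\Rightarrow$ $E^\ast$ satisfies an upper $p'$-estimate (with $1/p+1/p'=1$), and then to read off weak $p$-summability directly from the upper estimate. For the first arrow I would combine the definition of cotype with Khintchine's inequality in Banach lattices, which replaces the Rademacher average by the square function $\big\|\big(\sum_k|x_k|^2\big)^{1/2}\big\|$; since for pairwise disjoint elements $\big(\sum_k|x_k|^2\big)^{1/2}=\big|\sum_k x_k\big|$, cotype $p$ yields $\big(\sum_k\|x_k\|^p\big)^{1/p}\le C\,\big\|\sum_k x_k\big\|$ for disjoint families, which is the lower $p$-estimate. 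The second arrow is the classical duality between upper and lower estimates for a Banach lattice and its dual (see, e.g., \cite{meyer}). Finally, given disjoint $x_1^\ast,\dots,x_m^\ast$ and scalars $(a_k)$, the vectors $a_kx_k^\ast$ remain disjoint, so the upper $p'$-estimate gives $\big\|\sum_k a_kx_k^\ast\big\|\le C\big(\sum_k|a_k|^{p'}\|x_k^\ast\|^{p'}\big)^{1/p'}\le C\,\big(\sup_k\|x_k^\ast\|\big)\,\|(a_k)\|_{p'}$; thus $(a_k)\mapsto\sum_k a_kx_k^\ast$ defines a bounded operator $\ell_{p'}\to E^\ast$, which is exactly the assertion that $(x_n^\ast)_n$ is weakly $p$-summable.

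With both inputs in hand, condition (c) applied to the Banach lattice $E^\ast$ gives $(Jx_n)(x_n^\ast)=x_n^\ast(x_n)\to 0$, which is condition (c) for $E$ and hence, by Theorem \ref{teorema1}, proves that $E$ has the disjoint $DPP_p$. I expect the main obstacle to be precisely the cotype-to-estimate translation: one must route it through the correct space — a lower $p$-estimate in $E$ producing an upper $p'$-estimate in $E^\ast$ — and must land on \emph{genuine} weak $p$-summability of $(x_n^\ast)_n$ in $E^\ast$, i.e. boundedness of the operator on $\ell_{p'}$, rather than the weaker condition obtained by merely testing against the image $J(E)\subset E^{\ast\ast}$, since condition (c) for $E^\ast$ demands the former. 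A consistency check against Example \ref{exemplo1} is reassuring: there $E=\ell_3$ has cotype $3$ while $E^\ast=\ell_{3/2}$ fails the disjoint $DPP_3$, so the hypothesis is genuinely not met and no clash with the known failure of the unconditional implication arises.
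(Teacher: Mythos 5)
Your proof is correct and shares the paper's skeleton --- both verify condition (c) of Theorem \ref{teorema1} for $E$ by rewriting $x_n^\ast(x_n)=(Jx_n)(x_n^\ast)$ and applying condition (c) in $E^\ast$, with the entire weight resting on upgrading the bounded disjoint sequence $(x_n^\ast)_n$ to a weakly $p$-summable one --- but you perform that upgrade by a genuinely different argument. The paper first transfers cotype $p$ from $E$ to $E^{\ast\ast}$ via the principle of local reflexivity \cite[Corollary 11.9]{diestel} and then cites \cite[Lemma 2.4]{botgarmir} (extracted from the proof of \cite[Lemma 3.4]{fouzee}), which applies because $(x_n^\ast)_n$, being weakly null, is a disjoint sequence in the solid hull of a relatively weakly compact subset of $E^\ast$. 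You instead run the classical estimate duality: cotype $p$ yields a lower $p$-estimate in $E$ --- and here your detour through the square function is not even needed, since for pairwise disjoint $x_k$ one has $\left|\sum_k r_k(t)x_k\right|=\sum_k |x_k|$ for almost every $t$, so the Rademacher average collapses outright (your appeal to the Krivine--Maurey equivalence is nevertheless legitimate, finite cotype being exactly its hypothesis) --- whence $E^\ast$ satisfies an upper $p'$-estimate by the standard duality (see \cite{meyer}), and your operator bound on $\ell_{p'}$ does deliver genuine weak $p$-summability of $(x_n^\ast)_n$, tested against all of $E^{\ast\ast}$, as condition (c) for $E^\ast$ requires. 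What each route buys: yours is self-contained and proves strictly more at the key step, namely that in the dual of any Banach lattice of cotype $p$ \emph{every} norm-bounded disjoint sequence is weakly $p$-summable, so neither local reflexivity nor the weak compactness of $\{x_n^\ast : n \in \N\}$ is needed, the weak nullness of $(x_n^\ast)_n$ entering only through condition (c) itself; the paper's route is shorter on the page by outsourcing the upgrade to an existing lemma. Your closing consistency check is sound; note only that at $p=3/2$, the exponent actually used in Example \ref{exemplo1}, the hypothesis of Proposition \ref{lb5r} fails for the even simpler reason that cotype $p$ presupposes $p \geq 2$.
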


\begin{proof}
    Let $(x_n)_n$ be a disjoint weakly $p$-summable sequence in $E$ and let $(x_n^\ast)_n$ be a disjoint weakly null sequence in $E^\ast$. Since $\{ x_n^\ast : n \in \N \}$ is a weakly compact subset of $E^\ast$, $(x_n^\ast)_n$ is a disjoint sequence contained in the solid hull of a weakly compact set. The cotype $p$ of $E$ implies, by the principle of local reflexivity, that $E^{**}$ has cotype $p$ as well (see \cite[Corollary 11.9]{diestel}). From \cite[Lemma 2.4]{botgarmir}, which is a straightforward consequence of the proof of \cite[Lemma 3.4]{fouzee}, it follows that $(x_n^\ast)_n$ is a  disjoint weakly $p$-summable sequence in $E^\ast$. As the canonical embedding $J \colon E \to E^{\ast\ast}$ is a lattice homomorphism \cite[Proposition 1.4.5]{meyer}, $(J(x_n))_n$ is a disjoint weakly $p$-summable sequence in $E^{\ast \ast}$, so it is a disjoint weakly null sequence in $E^{\ast \ast}$. Therefore, as $E^\ast$ has the disjoint $DPP_p$, we get from Theorem \ref{teorema1} that
    $ x_n^\ast(x_n) = J(x_n)(x_n^\ast)\longrightarrow 0.$ By applying Theorem \ref{teorema1} once again we conclude that $E$ has the disjoint $DPP_p$.
\end{proof}

\section{The disjoint Dunford-Pettis* property of order p}

In Banach spaces, the $DP^*P$ is defined by replacing Dunford-Pettis sets in the definition of the $DPP$ with limited sets. The following definition arises naturally by doing the same with the definition of the disjoint $DPP_p$ property introduced in \cite{ali}.

%\begin{itemize}
%%    \item Definir disjoint $DP^*P_p$: We say that $E$ has the disjoint $DP^*P_p$ if every disjoint weakly $p$-summable sequence in $E$ is limited.

%%    \item\textcolor{blue}{$E ~ DP^*P_p \implies E^* DP^*P_p$? E a recíproca? E bidual? E sublattice? Quocientes}

%%    \item Demonstrar uma caracterização semelhante ao \cite[Theorem 4.2]{ali}.

%%    \item Obter versões polinomiais das caracterizações obtidas no item anterior.
%\end{itemize}

 \begin{definition} \label{df1}\rm
     A Banach lattice $E$ has the {\it disjoint Dunford-Pettis* property of order $p$}, in short $E$ has the {\it disjoint $DP^*P_p$}, if every disjoint weakly $p$-summable sequence in $E$ is limited.
\end{definition}

\begin{examples} \label{obs2} \rm
 (1) It is easy to see that every Banach lattice $E$ with the $DP^\ast P$ or with the $SP_p^+$ has the disjoint $DP^\ast P_p$. \\
 (2) The Banach lattice $\ell_2$ has the $SP_1^+$ (see \cite[Example 3.9(ii)]{zeefou}), and consequently the disjoint $DP^\ast P_1$. However, $\ell_2$ fails the $DP^\ast P$ (see \cite[Remark 1.4]{cargalou}).\\
 (3) The Banach lattice $\ell_\infty$ has the $DP^\ast P$ (see \cite[Remark 3.2(c)]{borwein}), hence it has the disjoint $DP^\ast P_p$ for every $1 \leq p < \infty$. However, $\ell_\infty$ fails the $SP_p^+$ by Proposition \ref{hgty} because its norm is not order continuous.\\%, \textcolor{red}{because the natural inclusion $ \ell_{p^{\ast}} \hookrightarrow \ell_\infty$ is not compact \cite[p.\,879]{zeefou}}. \\
 (4) Since limited sets are Dunford-Pettis, every Banach lattice $E$ with the disjoint $DP^\ast P_p$ has the disjoint $DPP_p$. As to the converse, $c_0$ is a Banach lattice with the disjoint $DPP_p$ (because $c_0$  has the $DPP$), failing the disjoint $DP^\ast P_p$. Indeed, $(e_n)_n$ is a disjoint weakly $p$-summable sequence in $c_0$, but $\{ e_n : n \in \N \}$ is not a limited set. \\
 (5) At this point it is clear that disjoint $DP^\ast P_p$ is not inherited by sublattices: as we saw above, $c_0$ is a sublattice of $\ell_\infty$ that fails the disjoint $DP^\ast P_p$ while $\ell_\infty$ has the disjoint $DP^\ast P_p$. \\
 (6) The disjoint $DP^\ast P_p$ is inherited by complemented sublattices. Let $F$ be a complemented sublattice of the Banach lattice $E$ with the disjoint $DP^\ast P_p$.  If $(x_n)_n$ is a disjoint weakly $p$-summable sequence in $F$, then it is a disjoint weakly $p$-summable sequence in $E$, hence $\{ x_n : n \in \N \}$ is a limited subset of $E$. So, given a weak* null sequence $(y_j^\ast)_j$ in $F^\ast$, denoting by $P$ the projection from $E$ onto $F$, we have $y_j^\ast \circ P \cvfe 0$ in $E^\ast$, hence
$$ \sup_{n \in \N} |y_j^\ast(x_n)| = \sup_{n \in \N} y_j^\ast (P (x_n)) \longrightarrow 0   \text{~as } j \longrightarrow \infty,$$
which implies that $\{ x_n: n \in \N \}$ is a limited subset of $F$. Thus $F$ has the disjoint $DP^\ast P_p$.
   % \begin{enumerate}
    %    \item For every $1 \leq p < q < \infty$, if $E$ has the disjoint $DP^*P_q$, then $E$ has the disjoint $DP^*P_p$.

     %   \item If $E$  has the disjoint $DP^*P_p$, then $E$ has the disjoint $DPP_p$. The converse is not true though.

     %   \item If $E$ has the $DP^*P$, then $E$  has the disjoint $DP^*P_p$. {\normalfont Indeed, if $(x_n)_n \subset $ is a disjoint weakly $p$-summable sequence, then $\{x_n: n \in \N\}$ is, in particular, a relatively weakly compact subset of $E$, and hence $(x_n)_n$ is limited.}

     %%   \item If $E$ has the $SP_p^+$, then $E$ has the disjoint $DP^*P_p$. {\normalfont Indeed, if $(x_n)_n \subset $ is a disjoint weakly $p$-summable sequence, then $(|x_n|)$ is a positive disjoint weakly $p$-summable sequence (see \cite[Proposition 2.2]{zeefou}), and hence $\norma{x_n} = \norma{|x_n|} \to 0$. Thus $\{ x_n : n \in \N \}$ is relatively compact and consequently $(x_n)_n$ is limited.}
    %%\end{enumerate}
\end{examples}

%%\noindent \textbf{Question 1:} Example of a Banach lattice having the disjoint $DPP_p$ but failing to have the disjoint $DP^*P_p$. \textcolor{red}{$E = c_0$ has the $DPP$, so it has the disjoint $DPP_p$, but it fails to have the disjoint $DP^*P_p$.}

%\smallskip

%%\noindent \textbf{Question 2:} Example of a Banach lattice having the disjoint $DP^*P_p$ but failing to have the $DP^*P$. \textcolor{red}{Take $E$ to be a infinite dimensional reflexive Banach lattice with the $SP_p^+$.}

%%\smallskip

%%\smallskip
%%\noindent \textbf{Question 3:} From the remarks above we have that
%$$ SP_p^+ \cup DP^* \subset dDP^\ast P_p \subset dDPP_p. $$
%When do we have the equality of above inclusions?

From Example \ref{obs2}(4) it is a natural question to wonder which condition should be added to the disjoint $DPP_p$ to ensure the disjoint $DP^\ast P_p$. And from Example \ref{obs2}(1) it is also a  natural question to wonder which condition should be added to the disjoint $DPP_p$ to ensure the $SP_p^+$. % or whenever a Banach lattice with the disjoint $DP^\ast P_p$ has the $SP_p^+$}.
The following characterizations are needed to answer these questions.

\begin{theorem} \label{teorema2}
        The following are equivalent for a Banach lattice $E$. \\
    {\rm (a)} $E$ has the disjoint $DP^*P_p$.\\
    {\rm (b)} For any Banach space $Y$ such that $B_{Y^\ast}$ is sequentially weak* compact, every bounded linear operator $T\colon E \to Y$ is disjoint $p$-convergent. \\
    {\rm (c)} Every bounded linear operator $T \colon E \to c_0$ is disjoint $p$-convergent. \\
    {\rm (d)} For every disjoint weakly $p$-summable sequence $(x_n)_n$ in $E$ and every weak* null sequence $(x_n^\ast)_n$ $ E^\ast$, we have $x_n^\ast(x_n)\longrightarrow 0$.
\end{theorem}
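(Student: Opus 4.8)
The plan is to prove the cycle of implications (a) $\Rightarrow$ (b) $\Rightarrow$ (c) $\Rightarrow$ (d) $\Rightarrow$ (a), which mirrors the structure of Theorem \ref{teorema1} with limited sets in place of almost Dunford-Pettis sets and weak* null sequences in place of disjoint weakly null sequences. The operator characterizations (b) and (c) are the additional ingredients, linking the property to disjoint $p$-convergence in the spirit of \cite[Theorem 4.2]{ali}.

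For (a) $\Rightarrow$ (b), I would fix $Y$ with $B_{Y^\ast}$ sequentially weak* compact, a bounded operator $T\colon E\to Y$ and a disjoint weakly $p$-summable sequence $(x_n)_n$ in $E$. By (a) the set $A=\{x_n:n\in\N\}$ is limited; since the adjoint $T^\ast$ is weak*-to-weak* continuous, for every weak* null $(y_k^\ast)_k$ in $Y^\ast$ the sequence $(T^\ast y_k^\ast)_k$ is weak* null in $E^\ast$, whence $\sup_n|y_k^\ast(Tx_n)|=\sup_n|(T^\ast y_k^\ast)(x_n)|\to 0$; thus $\{Tx_n:n\in\N\}$ is limited in $Y$. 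To conclude $\norma{Tx_n}\to 0$ I would argue by contradiction: if $\norma{Tx_{n_j}}>\varepsilon$ along a subsequence, pick norming functionals $y_j^\ast\in B_{Y^\ast}$ with $y_j^\ast(Tx_{n_j})=\norma{Tx_{n_j}}$, use the sequential weak* compactness of $B_{Y^\ast}$ to extract $y_j^\ast\cvfe y^\ast$, and split $y_j^\ast(Tx_{n_j})=(y_j^\ast-y^\ast)(Tx_{n_j})+(T^\ast y^\ast)(x_{n_j})$. The second summand tends to $0$ because $(x_{n_j})_j$ is weakly null and $T^\ast y^\ast\in E^\ast$, while the first tends to $0$ because $(y_j^\ast-y^\ast)_j$ is weak* null and $\{Tx_n\}$ is limited; this contradicts $y_j^\ast(Tx_{n_j})>\varepsilon$. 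I expect this to be the main obstacle.

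The implication (b) $\Rightarrow$ (c) is immediate once one checks that $Y=c_0$ qualifies: $B_{(c_0)^\ast}=B_{\ell_1}$ is weak* compact by Alaoglu and weak* metrizable because $c_0$ is separable, hence sequentially weak* compact. For (c) $\Rightarrow$ (d), given a weak* null $(x_n^\ast)_n$ in $E^\ast$ (automatically bounded by the uniform boundedness principle), the assignment $Tx=(x_k^\ast(x))_k$ defines a bounded operator $E\to c_0$; applying (c) to a disjoint weakly $p$-summable $(x_n)_n$ gives $\sup_k|x_k^\ast(x_n)|=\norma{Tx_n}\to 0$, and in particular $x_n^\ast(x_n)\to 0$.

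Finally, (d) $\Rightarrow$ (a) follows the contradiction scheme of (d) $\Rightarrow$ (b) in Theorem \ref{teorema1}, but more directly, since limitedness is tested against weak* null functionals without any disjointification. If a disjoint weakly $p$-summable $(x_n)_n$ were not limited, there would be $\varepsilon>0$ and a weak* null $(x_k^\ast)_k$ in $E^\ast$ with $\sup_n|x_k^\ast(x_n)|>\varepsilon$ for all $k$; choosing $x_{m_k}$ with $|x_k^\ast(x_{m_k})|>\varepsilon$ and noting, since $(x_k^\ast)_k$ is weak* null, that $\{m_k\}$ must be infinite, I would pass to a subsequence along which the $x_{m_k}$ form a subsequence of $(x_n)_n$ — still disjoint weakly $p$-summable — paired with the corresponding weak* null functionals. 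Then (d) forces $x_k^\ast(x_{m_k})\to 0$, a contradiction. The only points requiring care are the boundedness of weak* null sequences and the fact that finite, hence compact, sets are limited, both of which are standard.
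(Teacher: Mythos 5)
Your proof is correct and takes essentially the same route as the paper's: the same cycle (a)$\Rightarrow$(b)$\Rightarrow$(c)$\Rightarrow$(d)$\Rightarrow$(a), with the identical contradiction argument via sequential weak* compactness of $B_{Y^\ast}$ in (a)$\Rightarrow$(b) (you test limitedness of $\{Tx_n : n \in \N\}$ in $Y$ where the paper pulls the functionals back through $T^\ast$ to the limited set $\{x_n : n \in \N\}$ in $E$ --- the same computation), and the same $c_0$-valued operator $T(x)=(x_k^\ast(x))_k$ in (c)$\Rightarrow$(d). If anything, your (d)$\Rightarrow$(a) carefully spells out the subsequence extraction (ruling out a finite index set, then passing to strictly increasing indices) that the paper's terse final paragraph leaves implicit.
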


\begin{proof}
    (a) $\Rightarrow$ (b) Assume by way of contradiction that there exist a Banach space $Y$ such that $B_{Y^\ast}$ is sequentially weak* compact and a bounded non disjoint $p$-convergent linear operator $T \colon E \to Y$. We can take a disjoint weakly $p$-summable sequence $(x_n)_n$ in $E$ such that $(T(x_n)_n)_n$ is not norm null in $Y$. Without loss of generality, we may assume that there exists $\varepsilon > 0$ such that $\norma{T(x_n)} \geq \varepsilon$ for every $n \in \N$. Thus, for each $n \in \N$ there exists $y_n^\ast \in S_{Y^\ast}$ such that $|y_n^\ast(T(x_n))| \geq \varepsilon$. The sequential weak* compactness of $B_{Y^\ast}$ gives a subsequence $(y_{n_j}^\ast)_j$ and $y^\ast \in Y^\ast$ such that $y_{n_j}^\ast \cvfe y^\ast$ in $Y^\ast$. It follows that   $T^\ast(y_{n_j}^\ast - y^\ast) \cvfe 0$ because $T^*$ is weak*-weak* continuous. We know that $(x_n)_n$ is a limited sequence in $E$ because $E$ has the disjoint $DP^\ast P_p$, so
    $$ |T^\ast(y_{n_j}^\ast - y^\ast)(x_{n_j})| \leq \sup_{k \in \N} |T^\ast(y_{n_j}^\ast - y^\ast)(x_{k})| \longrightarrow 0. $$
      But $(x_{n_j})_j$ is, in particular, a weakly null sequence in $E$, hence $T^\ast y^\ast (x_{n_j}) \longrightarrow 0$. Therefore,
    $$ \varepsilon \leq  |y_{n_j}^\ast(Tx_{n_j})| = |T^\ast y_{n_j}^\ast (x_{n_j})| \leq |T^\ast(y_{n_j}^\ast - y^\ast)x_{n_j}| + |T^\ast y^\ast (x_{n_j})| \longrightarrow 0, $$
    which is a contradiction.

    (b) $\Rightarrow$ (c) Just take $Y = c_0$ and use that duals of separable spaces have sequentially weak* compact unit balls.

    (c) $\Rightarrow$ (d) Let $(x_n)_n$ be a disjoint weakly $p$-summable sequence in $E$ and let $(x_n^\ast)_n$ be a weak* null sequence in $E^\ast$. By assumption, the operator $T \colon E \to c_0$ defined by $T(x) = (x_n^\ast(x))_n$ for every $x \in E$ is disjoint $p$-convergent. Therefore,
    $$|x_n^\ast(x_n)| \leq \sup_{k \in \N} |x_k^\ast(x_n)| = \norma{T(x_n)}_\infty \longrightarrow 0.$$

    (d) $\Rightarrow$ (a) Let $(x_n)_n$ be a disjoint weakly $p$-summable sequence in $E$. If $A := \{ x_n : n \in \N \}$ was not limited, there would exist $\varepsilon > 0$, a sequence $(x_{n_k})_n \subset A$ and a weak* null sequence $(x_k^\ast)_k$ in $E^\ast$ such that $|x_k^\ast (x_{n_k})| \geq \varepsilon$ for every $k \in \N$. This contradicts the fact that $E$ has the disjoint $DP^*P_p$ and completes the proof.
\end{proof}

%%%We recall that a Banach lattice $E$ has property (d) if for every disjoint weak* null sequence $(x_n^\ast)_n$ in  $E^\ast$ we have that $|x_n^\ast| \cvfe 0$. A bounded linear operator $T: E \to c_0$ is said to be a disjoint operator whenever its associated sequence is disjoint (see \cite{loumir2}).

We can go quite further for $\sigma$-Dedekind complete Banach lattices.

\begin{theorem} \label{teorema3}
The following are equivalent  for a $\sigma$-Dedekind complete  Banach lattice $E$.\\
    {\rm (a)} $E$ has the disjoint $DP^\ast P_p$.\\
    {\rm (b)} Every disjoint weakly $p$-summable sequence in $E$ is almost limited. \\
    {\rm (c)} For every disjoint weakly $p$-summable sequence $(x_n)_n$  in $ E$ and every disjoint weak* null sequence $(x_n^\ast)_n$ in $ E^\ast$, we have  $x_n^\ast(x_n)\longrightarrow 0$. \\
    {\rm (d)} For every disjoint weakly $p$-summable sequence $(x_n)_n$ in $ E$ and every positive disjoint weak* null sequence $(x_n^\ast)_n$ in $ E^\ast$, we have $x_n^\ast(x_n)\longrightarrow 0$.\\
    {\rm (e)} For every disjoint weakly $p$-summable sequence $(x_n)_n$ in $ E$ and every positive weak* null sequence $(x_n^\ast)_n$ in $ E^\ast$, we have  $x_n^\ast(x_n)\longrightarrow 0$.
\end{theorem}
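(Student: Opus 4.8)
The plan is to build a cycle around the easy implications and to isolate the genuinely lattice-theoretic content in the step that upgrades \emph{almost limited} to \emph{limited}. First I would record the routine implications: (a)$\Rightarrow$(b) because limited sets are almost limited; (b)$\Rightarrow$(c) because, applying the definition of almost limitedness of $\{x_k\}$ to a disjoint weak* null sequence $(x_n^\ast)$, one has $|x_n^\ast(x_n)|\le\sup_k|x_n^\ast(x_k)|\to0$; and (c)$\Rightarrow$(d) and (e)$\Rightarrow$(d) are immediate, being restrictions to smaller classes of test sequences. What remains is to close the cycle through (d)$\Rightarrow$(e), (d)$\Rightarrow$(b) and (b)$\Rightarrow$(a), the last of these being the crux.

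For (d)$\Rightarrow$(e) I would exploit positivity together with band projections, which exist since $E$ is $\sigma$-Dedekind complete. Given a positive weak* null sequence $(f_n)$ and a disjoint weakly $p$-summable sequence $(x_n)$, let $P_n$ be the band projection onto the band generated by $x_n$; disjointness of the $x_n$ makes the bands mutually disjoint, so the functionals $g_n:=P_n^\ast f_n$ are positive and disjoint, with $g_n(x_n)=f_n(P_nx_n)=f_n(x_n)$. The point is that $(g_n)$ is again weak* null: for $y\ge0$ we have $0\le g_n(y)=f_n(P_ny)\le f_n(y)\to0$, and the general case follows by splitting $y=y^+-y^-$. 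Thus $(g_n)$ is positive, disjoint and weak* null, and (d) gives $f_n(x_n)=g_n(x_n)\to0$, which is (e). The implication (d)$\Rightarrow$(b) I would handle exactly as in Theorem~\ref{teorema1}: if some disjoint weakly $p$-summable $(x_n)$ is not almost limited, a subsequence together with a disjoint weak* null sequence $(x_k^\ast)$ produces a diagonal estimate $|x_k^\ast(x_{n_k})|\ge\varepsilon$; here one invokes the standard fact that in a $\sigma$-Dedekind complete Banach lattice the moduli of a disjoint weak* null sequence are again weak* null, so that $(|x_k^\ast|)$ is positive, disjoint and weak* null, and (d) applied to $(|x_{n_k}|)$ and $(|x_k^\ast|)$ forces $|x_k^\ast|(|x_{n_k}|)\to0$, contradicting the estimate.

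The heart of the matter is (b)$\Rightarrow$(a). Assuming every disjoint weakly $p$-summable sequence is almost limited, suppose for contradiction that such a sequence $(x_n)$ fails to be limited. Since finite sets are limited and $(x_n)$ is weakly null, a subsequence argument reduces matters to a disjoint weakly $p$-summable sequence (still denoted $(x_n)$) and a weak* null sequence $(h_n)$ with $|h_n(x_n)|\ge\varepsilon$. Band-projecting as before, put $g_n:=P_n^\ast h_n$, a disjoint sequence with $g_n(x_n)=h_n(x_n)$, so $|g_n(x_n)|\ge\varepsilon$. If I can show that $(g_n)$ is weak* null, then it is disjoint and weak* null, and the almost limitedness of $\{x_n\}$ forces $\sup_k|g_n(x_k)|\to0$, contradicting $|g_n(x_n)|\ge\varepsilon$. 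Everything therefore comes down to proving that $(g_n)$ is weak* null, that is, that $h_n(P_ny)\to0$ for each fixed $y\in E$.

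I expect this last point to be the main obstacle, and I would settle it through the following lemma, which is where $\sigma$-Dedekind completeness enters decisively: \emph{in a $\sigma$-Dedekind complete Banach lattice, every disjoint order bounded sequence is limited.} Granting it, $(P_ny)_n$ is disjoint and order bounded by $|y|$, hence limited, whence $h_n(P_ny)\to0$ and $(g_n)$ is weak* null, as needed. To prove the lemma, take $(u_n)$ disjoint with $|u_n|\le u$; $\sigma$-Dedekind completeness guarantees that the order sums $w(a):=\sum_n a_nu_n$ exist in $E$ for \emph{every} $a\in\ell_\infty$ and define a bounded operator $w\colon\ell_\infty\to E$ with $\|w(a)\|\le\|a\|_\infty\|u\|$ and $w(e_n)=u_n$. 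For a weak* null sequence $(h_n)$ in $E^\ast$, the sequence $(w^\ast h_n)$ is weak* null in $\ell_\infty^\ast$; since $\ell_\infty$ is a Grothendieck space, $(w^\ast h_n)$ is in fact weakly null, and the weak-to-weak continuous Yosida--Hewitt projection onto the countably additive part shows that $(h_n(u_m))_m\in\ell_1$ is weakly null, hence norm null by the Schur property of $\ell_1$. As $|h_n(u_n)|\le\|(h_n(u_m))_m\|_{\ell_1}$, this gives $h_n(u_n)\to0$; a standard subsequence argument against arbitrary indices then upgrades this diagonal conclusion to limitedness of $\{u_n\}$. With the lemma in hand the remaining arrows assemble into the full equivalence.
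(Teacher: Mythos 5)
Your proof is correct, but it closes the cycle along a genuinely different route from the paper. The paper's only nontrivial step is (d)$\Rightarrow$(a), done in a few lines by quoting two known results: given a disjoint weakly $p$-summable $(x_n)$ and an arbitrary weak* null $(x_n^\ast)$, the exercise in Aliprantis--Burkinshaw (Ex.\,22, p.\,77) produces a \emph{disjoint} sequence $(y_n^\ast)$ with $|y_n^\ast|\le|x_n^\ast|$ and $y_n^\ast(x_n)=x_n^\ast(x_n)$; then Chen--Chen--Ji's Lemma 2.2 (this is exactly where $\sigma$-Dedekind completeness is used in the paper) gives $|y_n^\ast|\overset{\omega^\ast}{\rightarrow}0$, and since $(|x_n|)$ is again disjoint weakly $p$-summable, hypothesis (d) yields $|x_n^\ast(x_n)|\le|y_n^\ast|(|x_n|)\to0$, which is condition (d) of Theorem \ref{teorema2} and hence gives (a). You instead close at (b)$\Rightarrow$(a), disjointify by band projections $P_n$ onto the principal bands of the $x_n$ (legitimate, since $\sigma$-Dedekind completeness yields the principal projection property), and carry the whole weight on your lemma that disjoint order bounded sequences in a $\sigma$-Dedekind complete lattice are limited, which you prove from scratch via the $\ell_\infty$-summing operator $w(a)=\sum_n a_nu_n$, the Grothendieck property of $\ell_\infty$, the Yosida--Hewitt band projection onto $\ell_1$, and the Schur property; I checked the details ($g_n:=P_n^\ast f_n$ is indeed positive/disjoint as claimed, $(g_n\wedge g_m)(y)\le g_n(P_my)+g_m(y-P_my)=0$, and the coordinates of the countably additive part of $w^\ast h_k$ are exactly $(h_k(u_j))_j$), and everything works. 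What each approach buys: the paper's argument is much shorter because it outsources the disjointification to the cited exercise and the $\sigma$-Dedekind completeness to Chen's lemma (a dual-side statement cousin to your lemma, with domination $|y_n^\ast|\le|x_n^\ast|$ in place of your band projections); your argument is self-contained, isolates a lemma of independent interest which in fact gives the quantitatively stronger conclusion $\sum_j|h_k(u_j)|\to0$ (so your closing ``subsequence argument'' is unnecessary --- the $\ell_1$-norm already dominates $\sup_j|h_k(u_j)|$), and it explains structurally why $\sigma$-Dedekind completeness matters (your counterexample-in-spirit: in $c$ the sequence $(e_{2n})$ is disjoint, order bounded, and not limited). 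Two cosmetic remarks: your arrow (d)$\Rightarrow$(e) is logically redundant once you have (d)$\Rightarrow$(b)$\Rightarrow$(a)$\Rightarrow$(e) (the paper likewise skips the easy (a)$\Rightarrow$(e)$\Rightarrow$(d)); and in (d)$\Rightarrow$(b) your ``standard fact'' that moduli of disjoint weak* null sequences are weak* null under $\sigma$-Dedekind completeness is precisely \cite[Proposition 1.4]{wnukdual}, which the paper itself invokes later in the proof of Theorem \ref{teorema4}(b).
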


\begin{proof} The implications (a) $\Rightarrow$ (b) and (c) $\Rightarrow$ (d) are  immediate. We skip the easy proofs of (a) $\Rightarrow$ (e) $\Rightarrow$ (d).

    (b) $\Rightarrow$ (c) Let $(x_n)_n$ be a disjoint weakly $p$-summable sequence in $E$ and let $(x_n^\ast)_n$ be a disjoint weak* null sequence in $E^\ast$. The set $ \{ x_k : k \in \N \}$ is almost limited by assumption, so
    $$ |x_n^\ast(x_n)| \leq \sup_{k \in \N} |x_n^\ast (x_k)| \longrightarrow 0. $$

    (d) $\Rightarrow$ (a) Let $(x_n)_n$ be a disjoint weakly $p$-summable sequence in $E$ and let $(x_n^\ast)_n$ be a weak* null sequence in $E^\ast$. As $(x_n)_n$ is disjoint, there exists a disjoint sequence $(y_n^\ast)_n $ in $E^\ast$ such that $|y_n^\ast| \leq |x_n^\ast|$ and $y_n^\ast(x_n)_n = x_n^\ast(x_n)_n$ for every $n \in \N$ (see \cite[Ex.\,22, p.\,77]{alip}). Since $E$ is $\sigma$-Dedekind complete, $x_n^\ast \cvfe 0$ in $E^\ast$ and $(y_n^\ast)_n \subset E^\ast$ is a disjoint sequence with $|y_n^\ast| \leq |x_n^\ast|$ for every $n \in \N$, we get from \cite[Lemma 2.2]{chen} that $|y_n^\ast| \cvfe 0$ in $E^\ast$. And since $(x_n)_n$ is disjoint and weakly $p$-summable, $(|x_n|)_n$ is also a disjoint weakly $p$-summable sequence in $E$ (see \cite[Proposition 2.2]{zeefou}). So, $|y_n^\ast| (|x_n|) \longrightarrow 0$ by assumption, therefore
    $$ |x_n^\ast(x_n)| = |y_n^\ast(x_n)| \leq |y_n^\ast|(|x_n|) \longrightarrow 0,  $$
   proving that $E$ has the disjoint $DP^\ast P_p$.
%    It is easy to see that (a) $\Rightarrow$ (e) $\Rightarrow$ (d) and we are done.
\end{proof}

It is well known that if a Banach space with the Grothendieck property has the $DPP$, then it has the $DP^\ast P$. This holds because Dunford-Pettis sets and  limited sets coincide in Banach spaces with the Grothendieck property. The lattice counterpart of the Grothendieck property is the so-called weak Grothendieck property introduced in \cite{mach}: a Banach lattice $E$  has the weak Grothendieck property if every disjoint weak* null sequence in $E^\ast$ is weakly null. It is under this property that the disjoint $DPP_p$ and the disjoint $DP^*P_p$ coincide:

%It was observed in the Introduction of \cite{galmir} that in a Banach lattice with the weak Grothendieck property, the classes of almost Dunford-Pettis and almost limited sets coincide. From this fact and Theorem \ref{teorema3}, we have the following:

\begin{corollary} \label{cor1}
        A $\sigma$-Dedekind complete Banach lattice has the disjoint $DP^\ast P_p$ if and only if it has the disjoint $DPP_p$ and the weak Grothendieck property.
\end{corollary}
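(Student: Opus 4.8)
The plan is to prove both directions using the characterizations already established, treating the two implications separately.

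For the forward direction, suppose $E$ is $\sigma$-Dedekind complete with the disjoint $DP^\ast P_p$. Since limited sets are Dunford-Pettis, Example \ref{obs2}(4) immediately gives the disjoint $DPP_p$, so the real content is the weak Grothendieck property. Here I would take a disjoint weak* null sequence $(x_n^\ast)_n$ in $E^\ast$ and aim to show it is weakly null, i.e. that $x^{\ast\ast}(x_n^\ast) \to 0$ for every $x^{\ast\ast} \in E^{\ast\ast}$. The disjoint $DP^\ast P_p$ controls the pairing of disjoint weakly $p$-summable sequences in $E$ against weak* null sequences in $E^\ast$ (Theorem \ref{teorema3}(c)), so the difficulty is that a general $x^{\ast\ast}$ need not come from such a pairing. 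I expect the cleanest route is to invoke a known equivalent formulation of the weak Grothendieck property in terms of operators into $c_0$, analogous to how the classical Grothendieck property is detected by weakly compact (equivalently, completely continuous) operators into $c_0$; combined with Theorem \ref{teorema2}(c), which says every bounded operator $E \to c_0$ is disjoint $p$-convergent, one should be able to force the required weak nullity.

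For the converse, assume $E$ is $\sigma$-Dedekind complete, has the disjoint $DPP_p$, and has the weak Grothendieck property; I want to deduce the disjoint $DP^\ast P_p$ via Theorem \ref{teorema3}(c). So let $(x_n)_n$ be disjoint weakly $p$-summable in $E$ and let $(x_n^\ast)_n$ be a \emph{disjoint weak* null} sequence in $E^\ast$. The weak Grothendieck property upgrades $(x_n^\ast)_n$ from disjoint weak* null to disjoint \emph{weakly} null. Now $(x_n)_n$ is disjoint weakly $p$-summable and $(x_n^\ast)_n$ is disjoint weakly null, so Theorem \ref{teorema1}(c) — the characterization of the disjoint $DPP_p$ — yields $x_n^\ast(x_n) \to 0$, which is exactly condition (c) of Theorem \ref{teorema3} and hence gives the disjoint $DP^\ast P_p$. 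This direction is essentially a clean chaining of the two characterizations, and I expect it to be routine.

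The main obstacle is the forward direction, specifically extracting the weak Grothendieck property from the disjoint $DP^\ast P_p$. The subtlety is that disjoint $DP^\ast P_p$ is a statement about disjoint weakly $p$-summable sequences in $E$, whereas the weak Grothendieck property is a statement purely about $E^\ast$, so one must manufacture an appropriate test sequence in $E$ (or an operator into $c_0$) from a given disjoint weak* null sequence in $E^\ast$. I would look to reduce this to the operator-theoretic characterization in Theorem \ref{teorema2}: given a disjoint weak* null sequence $(x_n^\ast)_n$ in $E^\ast$ that is not weakly null, produce a disjoint weakly $p$-summable sequence in $E$ witnessing failure of limitedness, contradicting disjoint $DP^\ast P_p$. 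Making this extraction precise — and verifying that the $\sigma$-Dedekind completeness hypothesis is what allows the passage between the disjoint structure in $E^\ast$ and a usable sequence in $E$ — is where the care is needed.
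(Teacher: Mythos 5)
Your converse direction is correct and complete: upgrading the disjoint weak* null sequence $(x_n^\ast)_n$ to a disjoint \emph{weakly} null one via the weak Grothendieck property, feeding the pair into Theorem \ref{teorema1}(c), and closing with the implication (c) $\Rightarrow$ (a) of Theorem \ref{teorema3} (where the $\sigma$-Dedekind completeness is used) is a clean sequential variant of the paper's argument. The paper instead works at the level of sets: the disjoint $DPP_p$ makes $\{x_n : n \in \N\}$ almost Dunford-Pettis, the weak Grothendieck property makes almost Dunford-Pettis and almost limited sets coincide (quoting \cite{galmir}), and then Theorem \ref{teorema3}, (b) $\Rightarrow$ (a), finishes. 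Your route has the minor advantage of using only results proved in the paper itself, at the cost of invoking two sequence characterizations instead of one citation.

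The genuine gap is exactly where you located it, and neither of your fallback plans closes it: you never prove that the disjoint $DP^\ast P_p$ implies the weak Grothendieck property. Your first idea (an operators-into-$c_0$ characterization of the weak Grothendieck property combined with Theorem \ref{teorema2}(c)) cannot work as stated, because Theorem \ref{teorema2}(c) constrains operators only on disjoint weakly $p$-summable sequences in $E$, whereas the weak Grothendieck property quantifies over all disjoint weak* null sequences in $E^\ast$, and nothing converts the latter into the former. Your second idea runs into a structural obstruction: failure of weak nullity of $(x_n^\ast)_n$ is witnessed by some $x^{\ast\ast} \in E^{\ast\ast}$ with $|x^{\ast\ast}(x_{n_k}^\ast)| \geq \varepsilon$, while weak* nullity forces $x_n^\ast(x) \longrightarrow 0$ for every \emph{fixed} $x \in E$; approximating $x^{\ast\ast}$ by elements of $E$ (Goldstine or local reflexivity) only yields a bounded sequence $(y_k)_k$ with $x_{n_k}^\ast(y_k) \geq \varepsilon/2$, with no disjointness and no weak $p$-summability, so the disjoint $DP^\ast P_p$ has nothing to say about it. You should also know that the paper's own proof does not fill this hole: its sentence ``the converse implication was settled in Example \ref{obs2}(4)'' delivers only the disjoint $DPP_p$ half of the forward direction (limited sets are Dunford-Pettis), and no argument is given anywhere that the disjoint $DP^\ast P_p$ yields the weak Grothendieck property. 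What is actually established, both by your proposal and by the printed proof, is: for $\sigma$-Dedekind complete lattices, disjoint $DPP_p$ plus weak Grothendieck implies disjoint $DP^\ast P_p$, and disjoint $DP^\ast P_p$ always implies disjoint $DPP_p$. So your instinct that the forward weak-Grothendieck extraction ``is where the care is needed'' was accurate, but a flagged difficulty is not a proof, and as matters stand the full biconditional as stated is proved neither by you nor by the paper.
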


\begin{proof} Let $E$ be a $\sigma$-Dedekind complete Banach lattice with the disjoint $DPP_p$ and the weak Grothendieck property. Given a disjoint weakly $p$-summable sequence $(x_n)_n$  in $E$, the disjoint $DPP_p$ gives that $\{ x_n : n \in \N\}$ is an almost Dunford-Pettis subset of $E$. In Banach lattices with the weak Grothendieck property, the classes of almost Dunford-Pettis and almost limited sets coincide (see \cite[p.\,2]{galmir}), so $\{ x_n : n \in \N\}$ is an almost limited subset of $E$.  By Theorem \ref{teorema3} we conclude that $E$ has the disjoint $DP^\ast P_p$. The converse implication was settled in Example \ref{obs2}(4).
%%    Assume that $E$ has the disjoint $DPP_p$ and let $(x_n)_n$ be a disjoint weakly $p$-summable sequence. Since $E$ has the disjoint $DPP_p$, $\{ x_n : n \in \N \}$ is a Dunford-Pettis subset of $E$. In particular, $\{ x_n : n \in \N \}$ is an almost Dunford-Pettis set, and then it is almost limited.  By Theorem \ref{teorema3} we conclude that $E$ has the disjoint $DP^\ast P_p$.
\end{proof}

It was observed in Example \ref{obs2}(4) that $c_0$ is a Banach lattice with the disjoint $DPP_p$ failing the disjoint $DP^\ast P_p$. This does not contradict Corollary \ref{cor1} because $c_0$ is a $\sigma$-Dedekind complete Banach lattice without the weak Grothendieck property. It is worth noticing that there exist $\sigma$-Dedekind complete Banach lattices with the weak Grothendieck property failing both the disjoint $DPP_p$ and the disjoint $DP^\ast P_p$. Take, for instance, $E = \ell_2$ and $p = 2$.

%%\noindent \textbf{Question 4:} Example of a $\sigma$-Dedekind complete Banach lattice with the weak Grothendieck property but failing the disjoint $DP^\ast P_p$. \textcolor{red}{$E = \ell_2$ does not have the disjoint $DP^\ast P_2$. Indeed, the cannonical sequence $(e_n)_n$ is weakly $2$-summable in $\ell_2$, but $\{ e_n : n\in \N \}$ is not a limited subset of $\ell_2$.}

%\medskip

Inspired by \cite[Proposition 3.3]{chen} we show that it is under an order continuous norm that the disjoint $DP^\ast P_p$ coincide with the $SP_p^+$:

\begin{proposition}\label{hgty}
    A Banach lattice $E$ has the $SP_p^+$ if and only if $E$ has order continuous norm and the disjoint $DP^\ast P_p$.
\end{proposition}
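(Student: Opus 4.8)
The plan is to prove the two implications separately, the forward one being essentially a repackaging of facts already recorded and the reverse one carrying the real work.

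\emph{($SP_p^+\Rightarrow$ order continuity and disjoint $DP^\ast P_p$).} That the $SP_p^+$ implies the disjoint $DP^\ast P_p$ is exactly Example \ref{obs2}(1) (norm null sequences are relatively compact, hence limited). For order continuity I would argue contrapositively: if the norm of $E$ is not order continuous, there is a disjoint sequence $(x_n)_n$ with $0\le x_n\le u$ for some $0\le u\in E$ and $\inf_n\norma{x_n}>0$. For disjoint positive elements one has $\sum_{k=1}^{N}x_k=\bigvee_{k=1}^N x_k\le u$, so $\sum_{k=1}^N\phi(x_k)\le\phi(u)$ for every $0\le\phi\in E^\ast$; decomposing an arbitrary functional into its positive and negative parts shows $\sum_n|\phi(x_n)|<\infty$ for all $\phi\in E^\ast$, i.e. $(x_n)_n$ is weakly $1$-summable and a fortiori weakly $p$-summable. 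Being disjoint and not norm null, it contradicts the $SP_p^+$.

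\emph{(order continuity and disjoint $DP^\ast P_p\Rightarrow SP_p^+$).} Let $(x_n)_n$ be a disjoint weakly $p$-summable sequence; I must show $\norma{x_n}\to 0$. By \cite[Proposition 2.2]{zeefou} the sequence $(|x_n|)_n$ is again disjoint and weakly $p$-summable, and since $\norma{x_n}=\norma{|x_n|}$ I may assume $x_n\ge 0$. The disjoint $DP^\ast P_p$ then gives that $A=\{x_n:n\in\N\}$ is limited. Suppose, for a contradiction, that after passing to a subsequence $\norma{x_n}\ge\varepsilon>0$ for all $n$. Using that the norm of the positive element $x_n$ is the supremum of $\phi(x_n)$ over positive $\phi\in B_{E^\ast}$, I choose $0\le x_n^\ast\in B_{E^\ast}$ with $x_n^\ast(x_n)\ge\varepsilon/2$.

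The crux is to manufacture from $(x_n^\ast)_n$ a \emph{weak$^*$ null} sequence that still separates the $x_n$, contradicting limitedness; this is where order continuity enters, in two guises. First, an order continuous norm forces $E$ to be Dedekind complete, so the band $B_n$ generated by $x_n$ is a projection band; let $P_n$ be the corresponding band projection and put $y_n^\ast=x_n^\ast\circ P_n\in B_{E^\ast}$, so that $y_n^\ast(x_n)=x_n^\ast(x_n)\ge\varepsilon/2$ because $x_n\in B_n$. Since the $x_n$ are disjoint the bands $B_n$ are pairwise orthogonal, whence $P_nP_m=0$ for $n\ne m$. Second, for fixed $x\in E$ with $x\ge 0$ the elements $(P_nx)_n$ are positive, pairwise disjoint and satisfy $\sum_{k=1}^{N}P_kx\le x$ for every $N$; thus $(P_nx)_n$ is an order bounded disjoint sequence, and an order continuous norm makes every such sequence norm null, so $y_n^\ast(x)=x_n^\ast(P_nx)\le\norma{P_nx}\to 0$. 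Splitting a general $x$ as $x^+-x^-$ yields $y_n^\ast\cvfe 0$. Limitedness of $A$ now gives $\sup_k|y_n^\ast(x_k)|\to 0$ as $n\to\infty$, in particular $y_n^\ast(x_n)\to 0$, contradicting $y_n^\ast(x_n)\ge\varepsilon/2$. Hence $\norma{x_n}\to 0$ and $E$ has the $SP_p^+$.

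The main obstacle is precisely this second direction: producing a weak$^*$ null sequence of functionals that detects the non-vanishing norms. The band-projection localization $y_n^\ast=x_n^\ast\circ P_n$ is what reconciles the two roles of order continuity (Dedekind completeness, to have the projections available, and norm nullity of order bounded disjoint sequences, to obtain weak$^*$ convergence), and checking $y_n^\ast\cvfe 0$ is the technical heart of the argument.
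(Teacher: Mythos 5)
Your proof is correct, and its second half follows a genuinely different route to the same contradiction. In the forward direction you essentially reproduce the paper's argument: the paper cites the weak $p$-summability of positive order bounded disjoint sequences (proof of \cite[Lemma 2.2]{botgarmir}) and then \cite[Theorem 2.4.2]{meyer}, whereas you prove the weak $1$-summability by hand via $\sum_{k=1}^{N}x_k=\bigvee_{k=1}^{N}x_k\le u$ and the decomposition $\phi=\phi^+-\phi^-$; same argument, just self-contained. In the reverse direction the skeleton is the same as the paper's --- take a positive disjoint weakly $p$-summable sequence with norms bounded below, norm it by positive functionals $x_n^\ast$, replace these by a weak$^*$ null sequence $(y_n^\ast)_n$ still satisfying $y_n^\ast(x_n)\ge \varepsilon/2$, and contradict the limitedness granted by the disjoint $DP^\ast P_p$ --- but the key step is implemented differently. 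The paper gets $(y_n^\ast)_n$ abstractly: the disjointification exercise \cite[Ex.\,22, p.\,77]{alip} yields a disjoint sequence with $|y_n^\ast|\le|x_n^\ast|$ and $y_n^\ast(x_n)=x_n^\ast(x_n)$, and then \cite[Corollary 2.4.3]{meyer} (order continuity of the norm of $E$ forces norm bounded disjoint sequences in $E^\ast$ to be weak$^*$ null) finishes. You instead localize, setting $y_n^\ast=x_n^\ast\circ P_n$ with the band projections onto the pairwise orthogonal bands generated by the $x_n$ (available because order continuity implies Dedekind completeness), and you verify weak$^*$ nullity directly from $0\le\sum_{k=1}^{N}P_kx\le x$ together with the fact that order bounded disjoint sequences are norm null --- note that you never even need the $y_n^\ast$ to be pairwise disjoint, which the paper's route does exploit via Corollary 2.4.3. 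In effect you reprove, in the positive and Dedekind complete setting, exactly the instances of the two cited results that the paper needs; what your version buys is a self-contained argument that makes the double role of order continuity (projections exist; $\norma{P_nx}\to 0$) completely explicit, at the cost of length and of requiring the norming functionals to be positive (harmless here, since $\norma{x}=\sup\{\phi(x):0\le\phi\in B_{E^\ast}\}$ for $x\ge 0$, as you correctly use). All the individual steps check out, including $P_nP_m=0$ for $n\ne m$ and the fact that $\sum_{k=1}^{N}P_k$ is again a band projection dominated by the identity.
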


\begin{proof}
    Assume first that $E$ has the $SP_p^+$. In Example \ref{obs2}(1) we saw that $E$ has the disjoint $DP^\ast P_p$. In order to prove that $E$ has order continuous norm, let $(x_n)_n$ be a disjoint order bounded sequence in $E^+$. So, $(x_n)_n$ is weakly $p$-summable (see the proof of \cite[Lemma 2.2]{botgarmir}), hence  $\norma{x_n} \longrightarrow 0$ by assumption. Therefore $E$ has order continuous norm by \cite[Theorem 2.4.2]{meyer}.

     Suppose now that $E$ has order continuous norm and the disjoint $DP^\ast P_p$. If $E$ does not have the $SP_p^+$, there exists a positive disjoint weakly $p$-summable sequence $(x_n)_n \subset E$ such that $\norma{x_n} = 1$ for all $n \in \N$. For each $n \in \N$, there exists $x_n^\ast \in E^\ast$ with $\norma{x_n^\ast} = 1$ such that $x_n^\ast (x_n) = 1$. Since $(x_n)_n$ is a disjoint sequence, there exists a disjoint sequence $(y_n^\ast)_n \subset E^\ast$ such that $|y_n^\ast| \leq |x_n^\ast|$ and $y_n^\ast(x_n) = x_n^\ast(x_n) = 1$ for every $n \in \N$ \cite[Ex.\,22, p.\,77]{alip}. As $E$ has order continuous norm, by \cite[Corollary 2.4.3]{meyer} we have $y_n^\ast \cvfe 0$ in $E^\ast $, which contradicts the fact that $E$ has the disjoint $DP^\ast P_p$.
\end{proof}

%Next we shall prove that, for $p \geq 2$,  in $\sigma$-Dedekind complete Banach lattices the disjoint $DP^*P$ coincide with the $p$-weak $DP^*P$ property introduced
According to \cite{ardakani}, a Banach lattice $E$ has the $p$-weak $DP^\ast$ property if every weakly $p$-compact subset of $E$ is almost limited; or, equivalently, for every weakly $p$-summable sequence $(x_n)_n$ in $ E$ and every disjoint weak* null sequence $(x_n^\ast)_n$ in $ E^\ast$, it holds $x_n^\ast (x_n)\longrightarrow 0$. Bearing in mind the equivalence (a) $\Leftrightarrow$ (c) in Theorem \ref{teorema3}, \cite[Theorem 3.8]{ardakani} establishes that a $\sigma$-Dedekind complete Banach lattice $E$ with type $1 < q \leq 2$ has the $p$-weak $DP^\ast$ property, $p \geq q^\ast$, if and only if $E$ %for every disjoint weakly $p$-summable sequence $(x_n)_n \subset E$ and every disjoint weak* null sequence $(x_n^\ast)_n \subset E^\ast$, we have that $x_n^\ast (x_n)\longrightarrow 0$. Observe that in $\sigma$-Dedekind complete Banach lattices, this last condition is equivalent to say that $E$
has the disjoint $DP^\ast P_p$. Next we show that, for $p \geq 2$, the nontrivial type of $E$ can be dropped. In summary, the following result improves upon \cite[Theorem 3.8]{ardakani}.  %To prove this characterization, the authors in \cite{ardakani} used Fourie-Zeekoei's result \cite[Lemma 3.4]{fouzee}. Nevertheless, by applying \cite[Theorem 3.6]{galmir2}, we have that the disjoint $DP^\ast P_p$ and the $p$-weak $DP^\ast$ property coincide in every $\sigma$-Dedekind complete Banach lattice for $p \geq 2$.

\begin{theorem} \label{teorema4} Let $E$ be a $\sigma$-Dedekind complete Banach lattice.\\
 {\rm (a)} If $E$ has the $p$-weak $DP^\ast$ property then $E$  has the disjoint $DP^\ast P_p$.\\
{\rm (b)} For $p \geq 2$, $E$ has the $p$-weak $DP^\ast$ property if and only if $E$  has the disjoint $DP^\ast P_p$.
\end{theorem}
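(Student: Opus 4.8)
The plan is to prove statement (a) first, since statement (b) combines (a) with a converse that should follow from the characterizations already in hand. For part (a), I would show directly that the $p$-weak $DP^\ast$ property implies the disjoint $DP^\ast P_p$ by using the characterization (a) $\Leftrightarrow$ (c) of Theorem \ref{teorema3}. Concretely, suppose $E$ has the $p$-weak $DP^\ast$ property and let $(x_n)_n$ be a disjoint weakly $p$-summable sequence in $E$ and $(x_n^\ast)_n$ a disjoint weak* null sequence in $E^\ast$. Since every disjoint weakly $p$-summable sequence is in particular weakly $p$-summable, the $p$-weak $DP^\ast$ property (in its sequential form) applies directly to the pair $(x_n)_n$ and $(x_n^\ast)_n$ to give $x_n^\ast(x_n) \longrightarrow 0$. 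This is exactly condition (c) of Theorem \ref{teorema3}, so $E$ has the disjoint $DP^\ast P_p$. This implication is essentially immediate from the definitions, and indeed part (a) requires no restriction on $p$.

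For part (b), the forward direction is already covered by part (a), so the work is in the converse: assuming $p \geq 2$ and that $E$ has the disjoint $DP^\ast P_p$, I must show $E$ has the $p$-weak $DP^\ast$ property. Here the key difference is that I must control arbitrary (not necessarily disjoint) weakly $p$-summable sequences $(x_n)_n$ paired with disjoint weak* null sequences $(x_n^\ast)_n$. The strategy I would follow is to reduce the general weakly $p$-summable sequence to a disjoint one. The natural tool is a disjointification argument: given a weakly $p$-summable sequence, one wants to produce a disjoint sequence that is comparable to it and to which the disjoint $DP^\ast P_p$ applies. The hypothesis $p \geq 2$ should enter precisely at this reduction step, most likely through a cotype-type or factorization argument that is available for $p \geq 2$ but fails for smaller $p$ (mirroring how the original \cite[Theorem 3.8]{ardakani} needed a nontrivial type assumption, which the condition $p \geq 2$ is meant to replace).

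The main obstacle, and the technical heart of the proof, will be this disjointification: passing from an \emph{arbitrary} weakly $p$-summable sequence to a disjoint weakly $p$-summable sequence without losing the relevant testing against $(x_n^\ast)_n$. I anticipate invoking a result along the lines of \cite[Lemma 2.4]{botgarmir} or the disjointification techniques in \cite{fouzee}, combined with the fact that $E$ is $\sigma$-Dedekind complete so that the lattice operations interact well with weak* null disjoint sequences (as used in the proof of Theorem \ref{teorema3}, via \cite[Lemma 2.2]{chen}). The role of $p \geq 2$ is to guarantee that after disjointifying, the resulting sequence remains weakly $p$-summable; for $p < 2$ this can fail, which is why the theorem is stated only for $p \geq 2$ and why it genuinely improves upon the type-$q$ hypothesis of \cite{ardakani}. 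Once the disjointification is in place, condition (c) of Theorem \ref{teorema3} closes the argument by forcing $x_n^\ast(x_n) \longrightarrow 0$.
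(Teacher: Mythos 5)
Your part (a) is correct and is essentially the paper's argument (the paper phrases it via weakly $p$-compact sets being almost limited and then Theorem \ref{teorema3}, but your sequential version is the same content). The genuine gap is in part (b): the step you yourself call the technical heart --- passing from an arbitrary weakly $p$-summable sequence to a disjoint one without losing the pairing against $(x_n^\ast)_n$ --- is left as a placeholder, and the tools you propose would not carry it out. There is no general reduction producing a disjoint weakly $p$-summable sequence ``comparable'' to a given weakly $p$-summable sequence; moreover \cite[Lemma 2.4]{botgarmir} and the technique from \cite{fouzee} concern disjoint sequences in the solid hull of relatively weakly compact sets and require a cotype-$p$ hypothesis (that is how they are used in Proposition \ref{lb5r}), which is exactly the kind of assumption Theorem \ref{teorema4} is designed to drop --- a general $\sigma$-Dedekind complete Banach lattice need not have cotype $p$, so invoking these results reintroduces what the theorem eliminates. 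Also, \cite[Lemma 2.2]{chen} is not the relevant input here; the $\sigma$-Dedekind completeness enters through \cite[Proposition 1.4]{wnukdual}, which upgrades the disjoint weak* null sequence $(x_n^\ast)_n$ to $|x_n^\ast| \cvfe 0$.

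What the paper actually does is a contradiction argument with a disjoint \emph{minorant}, not an equivalent disjoint sequence. Assuming the $p$-weak $DP^\ast$ property fails, one gets $\varepsilon>0$ with $|x_n^\ast|(|x_n|)\geq |x_n^\ast(x_n)|\geq\varepsilon$ for all $n$; since $|x_n^\ast|\cvfe 0$, one inductively selects $n_1<n_2<\cdots$ with $|x_{n_{k+1}}^\ast|\left(4^k\sum_{j=1}^k |x_{n_j}|\right)<2^{-k}$, sets $x=\sum_{k=1}^\infty 2^{-k}|x_{n_k}|$ and $u_k=\left(|x_{n_{k+1}}|-4^k\sum_{j=1}^k|x_{n_j}|-2^{-k}x\right)^+$. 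By \cite[Lemma 4.35]{alip} the $u_k$ are pairwise disjoint with $0\leq u_k\leq |x_{n_{k+1}}|$, and --- this is the only place $p\geq 2$ is used --- \cite[Theorem 3.6]{galmir2} guarantees that a sequence dominated in this way by a weakly $p$-summable sequence is itself weakly $p$-summable when $p\geq 2$. The disjoint $DP^\ast P_p$ (via Theorem \ref{teorema3}) then forces $|x_{n_{k+1}}^\ast|(u_k)\longrightarrow 0$, while the construction yields $|x_{n_{k+1}}^\ast|(u_k)\geq \varepsilon-2^{-k}-2^{-k}|x_{n_{k+1}}^\ast|(x)$, whose lim inf is at least $\varepsilon$ by boundedness of $(x_n^\ast)_n$ --- a contradiction. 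So your instinct about where $p\geq 2$ enters (preserving weak $p$-summability under disjointification) is right in spirit, but the mechanism is the domination $0\leq u_k\leq|x_{n_{k+1}}|$ of a disjoint sequence built by the classical Aliprantis--Burkinshaw truncation trick, not a cotype or factorization argument; without supplying this construction and the domination result, your part (b) is a statement of intent rather than a proof.
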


\begin{proof} (a)  If $(x_n)_n$ is a disjoint weakly $p$-summable sequence, then $\{ x_n : n \in \N \}$ is a weakly $p$-compact subset of $E$, hence it almost limited because $E$ has the $p$-weak $DP^\ast$ property. By Theorem \ref{teorema3} we conclude that $E$ has the disjoint $DP^\ast P_p$.

\noindent(b) Assume that $E$ has the disjoint $DP^\ast P_p$ and fails the $p$-weak $DP^\ast$ property. In this case there exists a weakly $p$-summable sequence $(x_n)_n$ in $E$ and a disjoint weak* null sequence $(x_n^\ast)_n$ in $E^\ast$ such that $(x_n^\ast(x_n))_n$ does not converge to zero. Without loss of generality, we may assume that $|x_n^\ast|(|x_n|) \geq |x_n^\ast(x_n)| \geq \varepsilon$ for every $n \in \N$ and some $\varepsilon > 0$. The $\sigma$-Dedekind completeness of $E$ gives $|x_n^\ast| \cvfe 0$ in $E^\ast$ by \cite[Proposition 1.4]{wnukdual}.  Put $n_1 = 1$. As $|x_n^\ast|(4|x_{n_1}|) \longrightarrow 0$, there exists $n_2 > n_1$ such that $|x_{n_2}^\ast| (4 |x_{n_1}|) < 1/2$. Again, since $|x_n^\ast|\left(4^2 \sum\limits_{j=1}^2 |x_{n_j}|\right) \longrightarrow 0$, there exists $n_3 > n_2$ such that $|x_{n_3}^\ast| \left(4^2 \sum\limits_{j=1}^2 |x_{n_j}|\right) < 1/2^2$.  Inductively we construct a strictly increasing sequence $(n_k)_k \subset \N$ such that
    $$ |x_{n_{k+1}}^\ast|\left(4^k \displaystyle\sum_{j=1}^k |x_{n_j}|\right) < \frac{1}{2^k}  \text{~for every } k \in \N. $$
 For any $k \in \N$, setting $x = \sum\limits_{k=1}^\infty 2^{-k} |x_{n_k}|$ and
    $$ u_k = \left ( |x_{n_{k+1}}| -4^k \sum_{j=1}^k |x_{n_j}| -2^{-k} x \right )^+, $$
    we obtain from \cite[Lemma 4.35]{alip} that $(u_k)_k$ is a positive disjoint sequence such that $0 \leq u_k \leq |x_{n_{k+1}}|$ for every $k \in \N$. Since $(x_{n_{k+1}})_k$ is weakly $p$-summable and $p \geq 2$, we get from \cite[Theorem 3.6]{galmir2} that $(u_k)_k$ is weakly $p$-summable, hence $x_{n_{k+1}}^\ast (u_k) \longrightarrow 0$ because $E$ has the disjoint $DP^\ast P_p$. From
    \begin{align*}
        |x_{n_{k+1}}^\ast|(u_k) & \geq |x_{n_{k+1}}^\ast| (|x_{n_{k+1}}|) - |x_{n_{k+1}}^\ast| \left( 4^k \sum_{j=1}^k |x_{n_j}| \right) - 2^{-k} |x_{n_{k+1}}^\ast| (x) \\
            & \geq \varepsilon - \frac{1}{2^k} - 2^{-k} |x_{n_{k+1}}^\ast| (x)
    \end{align*}
for every $k$, we conclude that $\liminf\limits_{n \to \infty} |x_{n_{k+1}}^\ast| (u_k) \geq \varepsilon$, a contradiction that completes the proof.
\end{proof}

%Observe that we only use the condition $p \geq 2$ in the proof of Theorem \ref{teorema4} to prove that if $E$ has the disjoint $DP^\ast P_p$, then $E$ has the $p$-weak $DP^\ast$. In fact, the proof given above shows that every $\sigma$-Dedekind complete Banach lattice with the $p$-weak $DP^\ast$ property for some $1 \leq p < \infty$ has the disjoint $DP^\ast P_p$.
%We have already seen that the converse holds for $p \geq 2$.

In the recent paper \cite{ardakani2}, H. Ardakani and K. Amjadi introduced the positive $DP^\ast$ property of order $p$ considering the positively limited sets introduced in \cite{ardachen}. It was observed before \cite[Theorem 3.6]{ardakani2} that a Banach lattice $E$ with the property (d) (see \cite{loumir2} for details) has the positive $DP^\ast$ property of order $p$ if and only if $E$ has the $p$-weak $DP^\ast$. Since $\sigma$-Dedekind complete Banach lattices have the property (d), the following is a straightforward consequence of Theorem \ref{teorema4}:

\begin{corollary} For $p \geq 2$, a $\sigma$-Dedekind complete Banach lattice has the positive $DP^\ast$ property of order $p$ if and only if it has the disjoint $DP^\ast P_p$.
\end{corollary}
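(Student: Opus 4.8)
The plan is to chain together two equivalences that are already available, since the statement is advertised as a straightforward consequence of Theorem \ref{teorema4}. The key point is that both the positive $DP^\ast$ property of order $p$ and the disjoint $DP^\ast P_p$ can be linked to the intermediate $p$-weak $DP^\ast$ property, and that each of the two required links is valid under the standing hypotheses, namely $\sigma$-Dedekind completeness and $p \geq 2$.

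First I would invoke the fact recorded just before the statement: a Banach lattice with the property (d) has the positive $DP^\ast$ property of order $p$ if and only if it has the $p$-weak $DP^\ast$ property. Since every $\sigma$-Dedekind complete Banach lattice enjoys the property (d), this equivalence applies to our lattice $E$, and it furnishes the first of the two links. Next I would apply Theorem \ref{teorema4}(b), which asserts that for $p \geq 2$ a $\sigma$-Dedekind complete Banach lattice has the $p$-weak $DP^\ast$ property if and only if it has the disjoint $DP^\ast P_p$; this provides the second link.

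Composing the two equivalences then yields, for $p \geq 2$ and $E$ $\sigma$-Dedekind complete, that $E$ has the positive $DP^\ast$ property of order $p$ if and only if $E$ has the disjoint $DP^\ast P_p$, which is exactly the claim. Since both constituent equivalences are already in hand, there is no genuine obstacle here; the only points deserving attention are the bookkeeping ones, namely that $\sigma$-Dedekind completeness is precisely what guarantees the property (d) needed for the first link, and that the restriction $p \geq 2$ is exactly the hypothesis under which Theorem \ref{teorema4}(b) supplies the implication from the disjoint $DP^\ast P_p$ back to the $p$-weak $DP^\ast$ property. (The forward implication, from the $p$-weak $DP^\ast$ property to the disjoint $DP^\ast P_p$, holds for all $p \geq 1$ by Theorem \ref{teorema4}(a), so it is only this reverse direction that forces the constraint $p \geq 2$.)
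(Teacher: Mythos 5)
Your proposal is correct and matches the paper's own argument exactly: the corollary is deduced by chaining the equivalence between the positive $DP^\ast$ property of order $p$ and the $p$-weak $DP^\ast$ property (valid under property (d), which $\sigma$-Dedekind completeness guarantees) with Theorem \ref{teorema4}(b). Your added remark correctly identifies that the restriction $p \geq 2$ is needed only for the reverse direction supplied by Theorem \ref{teorema4}(b).
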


Combining the corollary above with Theorem \ref{teorema3} we get that, for $p \geq 2$, \cite[Theorem 3.6]{ardakani2} holds for every $\sigma$-Dedekind complete Banach lattice.

\begin{remark}\rm For $1 \leq p < \infty$, it is worth mentioning that  every $\sigma$-Dedekind complete Banach lattice with the disjoint $DP^\ast P_p$ which is weak $p$-consistent (see \cite[Definition 2.4]{zeefou}) has the $p$-weak $DP^\ast$ property. %\textcolor{blue}{(Isso \'e \'obvio?)}
This follows by the same argument used in the proof of Theorem \ref{teorema4}(b) by using the (definition of the) weak $p$-consistency of $E$ instead of \cite[Theorem 3.6]{galmir2}.
\end{remark}

\section{Polynomial properties}

Inspired by the results obtained for polynomial versions of the $wDPP$ and of the $wDP^\ast P$ in \cite{wangshibu}, especially \cite[Theorems 3.8 and 4.3]{wangshibu}, we investigate in this section the behavior of the disjoint $DPP_p$ and of the disjoint $DP^\ast P_p$ when linear operators and linear functionals are replaced with homogeneous polynomials. % inspired in the already known results for the $wDPP$ and the $wDP^\ast P$ (see, respectively, Theorem 3.8 and Theorem 4.3 in \cite{wangshibu}).
In particular, polynomial versions of our Theorems \ref{teorema1}, \ref{teorema2} and \ref{teorema3} shall be proved. %replacing bounded linear operators and linear functionals by homogeneous polynomials.

We begin by recalling the terminology and a few properties concerning regular polynomials and the positive projective symmetric tensor product. Details can be found in \cite{bubuskes} and \cite{loane}.

A $k$-homogeneous polynomial between Riesz spaces $P \colon E \to F$ is positive  if its associated symmetric multilinear operator $T_P\colon E \times \overset{k}{\cdots} \times E \to F$ is positive, meaning that $T_P(x_1, \ldots, x_k) \geq 0$ for all $x_j \in E_j^+, j = 1,\ldots, k$. The difference of two positive $k$-homogeneous polynomials is called a regular homogeneous polynomial, and the set of all these polynomials is denoted by $\mathcal{P}^r(^kE, F)$. When $F$ is the scalar field we simply write $\mathcal{P}^r(^kE)$. If $E$ and $F$ are Banach lattices with $F$ Dedekind complete, then $\mathcal{P}^r(^kE, F)$ is a Banach lattice with the regular norm $\norma{P}_r = \norma{|P|}$, where $|P|$ denotes the absolute value of the regular polynomial $P \colon E \to F$.

For a Banach lattice $E$, we denote the
$k$-fold positive projective symmetric tensor product of $E$ by $\widehat{\otimes}_{s, |\pi|}^k E$, which is a Banach lattice
endowed with the positive projective symmetric tensor norm $\norma{\cdot}_{s, |\pi|}$. As usual, we write $\otimes^k x = x \, \otimes \stackrel{k}{\cdots} \otimes \, x$ for every $x \in E$.  %Let  $\theta_k \colon E \to \widehat{\otimes}_{s, |\pi|}^k E$ be the canonical $k$-homogeneous polynomial given by $\theta_k(x) = \otimes^k x$.
For every $P \in \mathcal{P}^r(^kE, F)$ there exists a unique regular linear operator $P^\otimes \colon \widehat{\otimes}_{s, |\pi|}^k E \to  F$, called the linearization of $P$, such that $P(x) = P^\otimes(\otimes^kx)$ for every $x \in E$. Moreover, the correspondence
\begin{equation}P \in \mathcal{P}^r(^kE, F) \mapsto P^\otimes \in \mathcal{L}^r\left(\widehat{\otimes}_{s, |\pi|}^k E; F\right)  \label{omku}\end{equation}
% The opereator
%$$\Phi\colon \mathcal{L}^r\left(\widehat{\otimes}_{s, |\pi|}^k E; F\right) \to \mathcal{P}^r(^k E; F)~,~ \Phi(T) = T \circ \theta_k,% \quad T \in \mathcal{L}^r(\widehat{\otimes}_{s, |\pi|}^k E; F),
%$$
is an isometric isomorphism and a lattice homomorphism. %For each $P \in \mathcal{P}^r(^k E; F)$, the regular linear operator $\Phi^{-1} (P)\colon \widehat{\otimes}_{s, |\pi|}^k E \to F $ is called the linearization of $P$, and it is denoted by $P^\otimes$.

To prove the first theorem of the section we need two lemmas. The first one can be found within the proof of \cite[Theorem 3.8]{wangshibu}.

\begin{lemma} \label{lemapol}
    Let $(x_n^\ast)_n$ be a positive weakly null sequence in $E^\ast$ and $k \in \N$.    For each $n \in \N$, define $P_n(x) = (x_n^\ast (x))^k$ for every $x \in E$, that is, $P_n = (x_n^*)^k \in {\cal P}^r(^kE)$. Then $P_n^\otimes = \otimes ^k x_n^\ast $ for every $n \in \N$ and $(P_n)_n$ is a positive weakly null sequence in $\left ( \widehat{\otimes}_{s, |\pi|}^k E\right )^\ast$. %% Moreover, if $(x_n^\ast)_n$ is a disjoint sequence, then $(P_n^\otimes)_n$ is a disjoint sequence in $\left ( \widehat{\bigotimes}_{k, s, \pi} E\right )^\ast$.
\end{lemma}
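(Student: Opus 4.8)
The plan is to verify three things in turn: that $P_n$ is a well-defined regular $k$-homogeneous polynomial whose linearization is the functional $\otimes^k x_n^\ast$, that each $P_n$ is positive, and finally — the substantive point — that $(P_n)_n$ is weakly null in $W:=\left(\widehat{\otimes}_{s, |\pi|}^k E\right)^\ast$. Throughout I identify $\mathcal{P}^r(^kE)$ with $W=\mathcal{L}^r\left(\widehat{\otimes}_{s, |\pi|}^k E; \K\right)$ through the isometric lattice isomorphism \eqref{omku}, under which $P_n$ corresponds to $P_n^\otimes$. Recall that, since $\K$ is Dedekind complete, $W$ is a Banach lattice and $\mathcal{L}^r\left(\widehat{\otimes}_{s, |\pi|}^k E; \K\right)$ is just the topological dual.

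For the linearization formula I would argue that both $P_n^\otimes$ and $\otimes^k x_n^\ast$ are bounded (hence regular) linear functionals on $\widehat{\otimes}_{s, |\pi|}^k E$ agreeing on the elementary symmetric tensors: for every $x \in E$ one has $P_n^\otimes(\otimes^k x) = P_n(x) = (x_n^\ast(x))^k = (\otimes^k x_n^\ast)(\otimes^k x)$. Since the elementary tensors $\otimes^k x$ span a dense subspace of $\widehat{\otimes}_{s, |\pi|}^k E$, continuity forces $P_n^\otimes = \otimes^k x_n^\ast$. Positivity is equally direct: the symmetric $k$-linear form associated with $P_n$ is $(y_1, \ldots, y_k) \mapsto x_n^\ast(y_1)\cdots x_n^\ast(y_k)$, which is nonnegative on $E^+ \times \overset{k}{\cdots} \times E^+$ because $x_n^\ast \geq 0$; hence $P_n \in \mathcal{P}^r(^kE)^+$ and, equivalently, $\otimes^k x_n^\ast \geq 0$ in $W$.

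The hard part is the weak nullity, and it is the step I expect to be the main obstacle. First I would record what comes for free: since $x_n^\ast(x) \longrightarrow 0$ for each fixed $x \in E$, we get $(\otimes^k x_n^\ast)(\otimes^k x) = (x_n^\ast(x))^k \longrightarrow 0$, and as $(\otimes^k x_n^\ast)_n$ is norm bounded and the elementary tensors are dense, this already yields $P_n \cvfe 0$ in $W$. The genuine difficulty is to upgrade weak$^\ast$ nullity to weak nullity, and here the lattice structure is essential: for the \emph{ordinary} projective tensor product the analogous statement fails, since the rank-one operators $e_n \otimes e_n$ (images of the disjoint weakly null $(e_n)_n$) are not weakly null in $\ell_2 \widehat{\otimes}_\pi \ell_2 \cong \mathcal{N}(\ell_2)$, because pairing with the identity gives $\langle e_n \otimes e_n, I\rangle = 1$ for all $n$. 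Consequently no soft weak-to-weak continuity argument for $\xi \mapsto \otimes^k \xi$ can succeed, and the positivity must be used. The route I would follow is to observe that $(P_n)_n$ is a positive sequence in the Banach lattice $W$, so by decomposing functionals into positive parts it suffices to check that $\Psi(P_n) \longrightarrow 0$ for every positive $\Psi \in W^\ast$, and then to invoke the order-theoretic estimates carried out inside the proof of \cite[Theorem 3.8]{wangshibu}, where precisely this passage from a positive weakly null sequence in $E^\ast$ to a weakly null sequence of powers in $\left(\widehat{\otimes}_{s, |\pi|}^k E\right)^\ast$ is established. This final step is the one that genuinely exploits the positivity of $(x_n^\ast)_n$ together with the geometry of the positive projective symmetric tensor norm.
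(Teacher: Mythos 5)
Your proposal is correct and, at the decisive step, coincides with the paper's own proof, which consists entirely of the remark that the lemma ``can be found within the proof of \cite[Theorem 3.8]{wangshibu}'': your direct verifications of the linearization identity $P_n^\otimes = \otimes^k x_n^\ast$ (density of the elementary tensors), of positivity, and of weak$^\ast$ nullity are routine and sound, and for the genuine content --- upgrading weak$^\ast$ nullity to weak nullity via positivity --- you defer to exactly the same passage of \cite{wangshibu} that the paper itself invokes. Your $e_n \otimes e_n$ counterexample correctly explains why positivity is indispensable, but this is supplementary commentary; in substance the two arguments are the same.
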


The proof of the next lemma is essentially contained in the proof of \cite[Proposition 2.34]{dineen}. We give a short reasoning for the convenience of the reader.

\begin{lemma} \label{lemapol2}
If $(x_n)_n$ is a weakly null Dunford-Pettis sequence in a Banach space $X$, then $Q(x_n)\longrightarrow 0$ for every homogeneous polynomial $Q\colon X \to \K$, where $\K = \R$ or $\mathbb{C}$.
\end{lemma}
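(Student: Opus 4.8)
The plan is to fix the degree of homogeneity and induct on it. So I would let $Q\colon X \to \K$ be $k$-homogeneous with associated symmetric $k$-linear form $A\colon X^k \to \K$, so that $Q(x) = A(x,\dots,x)$ for every $x \in X$. I will use throughout that, since $(x_n)_n$ is a Dunford-Pettis sequence, the set $\{x_n : n \in \N\}$ is a Dunford-Pettis set; hence every weakly null sequence in $X^*$ converges uniformly to zero on it.

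For the base case $k=1$ the polynomial $Q$ is simply an element of $X^*$, so the weak nullity of $(x_n)_n$ gives $Q(x_n) \to 0$ at once (here the Dunford-Pettis hypothesis is not even needed).

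For the inductive step I would assume the statement for all scalar $(k-1)$-homogeneous polynomials on $X$ and, for each $n$, define $\psi_n \in X^*$ by $\psi_n(x) = A(x_n,\dots,x_n,x)$, with $k-1$ copies of $x_n$ occupying the first $k-1$ slots, so that $\psi_n(x_n) = Q(x_n)$. The heart of the argument is to show that $(\psi_n)_n$ is weakly null in $X^*$. To do this I would fix $x^{**} \in X^{**}$ and consider $R(y) = x^{**}\big(A(y,\dots,y,\cdot)\big)$, which is a scalar $(k-1)$-homogeneous polynomial on $X$; the inductive hypothesis applied to $R$ and to the (still weakly null and Dunford-Pettis) sequence $(x_n)_n$ then gives $x^{**}(\psi_n) = R(x_n) \to 0$. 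As $x^{**}$ is arbitrary, $(\psi_n)_n$ is weakly null, and since $\{x_n : n \in \N\}$ is a Dunford-Pettis set it converges uniformly to zero on it; thus $|Q(x_n)| = |\psi_n(x_n)| \le \sup_{m} |\psi_n(x_m)| \to 0$, closing the induction.

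The step I expect to be the main obstacle is exactly the weak nullity of $(\psi_n)_n$. For $k=2$ one has $\psi_n = B(x_n)$ for the bounded linear operator $B\colon X \to X^*$, $B(y) = A(y,\cdot)$, and weak-weak continuity settles it; but for $k \ge 3$ the functional $\psi_n$ depends nonlinearly on $x_n$, so no single linear operator is available. The inductive reduction is what resolves this: testing $(\psi_n)_n$ against an arbitrary $x^{**}$ turns the question into the behavior of a lower-degree polynomial along $(x_n)_n$, which the inductive hypothesis controls.
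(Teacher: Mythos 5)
Your proof is correct and takes essentially the same route as the paper's: your $\psi_n$ is exactly $P(x_n)$ for the $X^\ast$-valued $(k-1)$-homogeneous polynomial $P(x) = T_Q(\cdot\,, x, \ldots, x)$ used there, and you establish the weak nullity of $(\psi_n)_n$ by the same device of composing with an arbitrary $x^{\ast\ast} \in X^{\ast\ast}$ and invoking the inductive hypothesis on the resulting scalar polynomial of lower degree. The only differences are cosmetic: you index the induction as $k-1 \to k$ instead of $k \to k+1$, and you spell out the uniform bound $|\psi_n(x_n)| \leq \sup_m |\psi_n(x_m)| \to 0$ where the paper simply cites the Dunford-Pettis property of the sequence.
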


\begin{proof}
    The fact that $(x_n)_n$ is a weakly null sequence gives that the result holds for linear functionals, that is, for $1$-homogeneous polynomials. Suppose that the result holds for $k$-homogeneous polynomials and let us prove that it holds for $(k+1)$-homogeneous polynomials. Let $Q \colon X \to \K$ be a $(k+1)$-homogeneous polynomial. The map
    $$P\colon X \to X^\ast ~, ~[P(x)](z) = T_Q(z,x, \overset{k}{\cdots}, x),$$
       is a $k$-homogeneous polynomial, so $x^{\ast \ast} \circ P \colon X \to \K$ is a $k$-homogeneous polynomial for every $x^{\ast \ast} \in X^{\ast \ast}$. By the induction hypothesis, $x^{\ast \ast} (P(x_n)) \longrightarrow 0$ for every $x^{\ast \ast} \in X^{\ast \ast}$, which implies that $(P(x_n))_n$ is a weakly null sequence in $X^\ast$. As $(x_n)_n$ is a Dunford-Pettis sequence, it follows that $Q(x_n) = (P(x_n))(x_n)\longrightarrow 0$ and we are done.
\end{proof}

\begin{theorem} \label{teopol1}
    For a Banach lattice $E$ and a positive integer $k$, the following statements are equivalent.  \\
{\rm (a)} $E$ has the disjoint $DPPp$. \\
{\rm (b)} For any Banach lattice $F$ with order continuous norm, every positive weakly compact $k$-homogeneous polynomial $P\colon E \to F$ is disjoint $p$-convergent.\\%, that is it takes disjoint weakly $p$-summable sequences to norm null sequences. \\
{\rm (c)} Every positive weakly compact $k$-homogeneous polynomial $P\colon E \to c_0$ is disjoint $p$-convergent. \\
{\rm (d)} For every disjoint weakly $p$-summable sequence $(x_n)_n$ in $E$ and for every positive weakly null sequence $(P_n)_n$ in $\mathcal{P}^r(^k E)$, it holds $P_n(x_n)\longrightarrow 0$. \\
{\rm (e)}  For every disjoint weakly $p$-summable sequence $(x_n)_n$ in $ E$ and for every positive disjoint weakly null sequence $(P_n)_n$ in $ \mathcal{P}^r(^k E)$, it holds $P_n(x_n)\longrightarrow 0$. \\
{\rm (f)} For every disjoint weakly $p$-summable sequence $(x_n)_n$ in $ E$ and for every disjoint weakly null sequence $(P_n)_n$ in $ \mathcal{P}^r(^k E)$, it holds $P_n(x_n)\longrightarrow 0$.

\end{theorem}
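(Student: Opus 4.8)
The plan is to run the cyclic chain (a)$\Rightarrow$(b)$\Rightarrow$(c)$\Rightarrow$(d)$\Rightarrow$(a), and then to fold in (e) and (f) through the lattice operations. The engine for the positive directions is a sequential strengthening of Lemma \ref{lemapol2}, which I will call the \emph{diagonal lemma}: if $(x_n)_n$ is a weakly null Dunford--Pettis sequence in $E$ and $(R_n)_n$ is a weakly null sequence in $\mathcal{P}^r(^kE)$, then $R_n(x_n)\longrightarrow 0$. I would prove it by induction on $k$, imitating the proof of Lemma \ref{lemapol2} but allowing the polynomial to vary with $n$. The case $k=1$ is immediate from the definition of a Dunford--Pettis set. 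For the step $k\to k+1$, associate to each $R_n\in\mathcal{P}^r(^{k+1}E)$ the $k$-homogeneous map $\widetilde P_n\colon E\to E^\ast$ given by $[\widetilde P_n(x)](z)=T_{R_n}(z,x,\overset{k}{\cdots},x)$, so that $R_n(x_n)=[\widetilde P_n(x_n)](x_n)$. For each fixed $x^{\ast\ast}\in E^{\ast\ast}$ the assignment $R\mapsto x^{\ast\ast}\circ\widetilde P_R$ is a bounded linear map $\mathcal{P}^r(^{k+1}E)\to\mathcal{P}^r(^kE)$, hence weak-to-weak continuous, so $(x^{\ast\ast}\circ\widetilde P_n)_n$ is weakly null in $\mathcal{P}^r(^kE)$; the induction hypothesis gives $x^{\ast\ast}(\widetilde P_n(x_n))\longrightarrow 0$, i.e. $(\widetilde P_n(x_n))_n$ is weakly null in $E^\ast$. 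As $\{x_n:n\in\N\}$ is a Dunford--Pettis set, testing this weakly null sequence against it yields $R_n(x_n)\longrightarrow 0$.

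Granted the diagonal lemma, the implications (a)$\Rightarrow$(d), (a)$\Rightarrow$(e) and (a)$\Rightarrow$(f) are immediate: a disjoint weakly $p$-summable sequence is weakly null and, by the disjoint $DPP_p$, is a Dunford--Pettis set, while each hypothesis on $(P_n)_n$ is a special case of being weakly null in $\mathcal{P}^r(^kE)$. The genuinely vector-valued step (a)$\Rightarrow$(b) is the main obstacle. I would first note that $P(x_n)\longrightarrow 0$ weakly, applying Lemma \ref{lemapol2} to $y^\ast\circ P$ for each $y^\ast\in F^\ast$. Assuming $\norma{P(x_n)}\not\longrightarrow 0$, choose $y_n^\ast\in S_{F^\ast}$ with $y_n^\ast(P(x_n))\geq\varepsilon$ and set $Q_n=y_n^\ast\circ P\in\mathcal{P}^r(^kE)$, whose linearization is $Q_n^\otimes=(P^\otimes)^\ast(y_n^\ast)$. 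Since $P$, hence $P^\otimes$, is weakly compact, $(P^\otimes)^\ast$ is weakly compact, so $(Q_n^\otimes)_n$ lies in a relatively weakly compact subset of $(\widehat{\otimes}_{s,|\pi|}^k E)^\ast\cong\mathcal{P}^r(^kE)$; passing to a subsequence gives $Q_{n_j}\rightharpoonup R$. Then $Q_{n_j}-R$ is weakly null, the diagonal lemma yields $(Q_{n_j}-R)(x_{n_j})\longrightarrow 0$, and Lemma \ref{lemapol2} yields $R(x_{n_j})\longrightarrow 0$, so $y_{n_j}^\ast(P(x_{n_j}))=Q_{n_j}(x_{n_j})\longrightarrow 0$, contradicting $\geq\varepsilon$.

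The remaining arrows are lighter. For (b)$\Rightarrow$(c) take $F=c_0$. For (c)$\Rightarrow$(d), given a positive weakly null $(P_n)_n$ in $\mathcal{P}^r(^kE)$ I would define $P\colon E\to c_0$ by $P(x)=(P_n(x))_n$; the sequence $(P_n^\otimes)_n$ being weakly null in $(\widehat{\otimes}_{s,|\pi|}^k E)^\ast$ makes $P$ well defined and positive, and $P$ is weakly compact because its linearization has weakly compact adjoint, the latter sending the unit vectors of $\ell_1$ to the weakly null sequence $(P_n^\otimes)_n$ (Krein's theorem). Thus (c) gives $\norma{P(x_n)}_\infty\longrightarrow 0$ and a fortiori $P_n(x_n)\longrightarrow 0$. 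To close the loop I use (d)$\Rightarrow$(a): for a positive weakly null $(x_n^\ast)_n$ in $E^\ast$, Lemma \ref{lemapol} makes $((x_n^\ast)^k)_n$ positive weakly null in $\mathcal{P}^r(^kE)$, so (d) gives $(x_n^\ast(x_n))^k\longrightarrow 0$, i.e. $x_n^\ast(x_n)\longrightarrow 0$, and Theorem \ref{teorema1}\,(e)$\Rightarrow$(a) finishes.

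It remains to incorporate (e) and (f). Here (f)$\Rightarrow$(e) is trivial, and (e)$\Rightarrow$(f) follows from the regular-polynomial inequality $|P_n(x_n)|\le |P_n|(|x_n|)$ once one checks that $(|P_n|)_n$ is positive disjoint weakly null (via \cite[Proposition 1.3]{wnukdual}, applied in the dual Banach lattice $\mathcal{P}^r(^kE)$) and that $(|x_n|)_n$ is disjoint weakly $p$-summable (via \cite[Proposition 2.2]{zeefou}), so that (e) applies. To connect this pair back to (a) I would reprove (d)$\Rightarrow$(a) in the disjoint setting: starting from a positive \emph{disjoint} weakly null $(x_n^\ast)_n$ and using Theorem \ref{teorema1}\,(d)$\Rightarrow$(a), which forces the auxiliary fact that $x_n^\ast\perp x_m^\ast$ implies $(x_n^\ast)^k\perp(x_m^\ast)^k$ in $\mathcal{P}^r(^kE)$ (equivalently $\otimes^k x_n^\ast\perp\otimes^k x_m^\ast$ in the positive projective symmetric tensor product). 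Thus the three places I expect to demand real work are the inductive bookkeeping in the diagonal lemma (the weak-to-weak continuity of $R\mapsto x^{\ast\ast}\circ\widetilde P_R$), the weak-compactness/adjoint argument in (a)$\Rightarrow$(b), and this disjointness-preservation fact for the tensor diagonal.
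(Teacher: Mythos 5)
The one genuine gap is in your (a)$\Rightarrow$(b), at the words ``Since $P$, hence $P^\otimes$, is weakly compact.'' That ``hence'' is not automatic, and it is exactly the crux of this implication. For the symmetric \emph{projective} tensor product of Banach spaces the transfer would indeed follow from Krein--\v{S}mulian, because there the closed unit ball is the closed absolutely convex hull of $\{\otimes^k x : x \in B_E\}$; but the positive projective symmetric tensor norm $\norma{\cdot}_{s,|\pi|}$ is a Fremlin-type norm computed from positive dominations $|u|\le\sum_i \otimes^k x_i$ with $x_i\ge 0$, so its unit ball is solid and in general strictly larger than the absolutely convex hull of the diagonal tensors. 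Consequently $P^\otimes$ must be controlled on order intervals, about which weak compactness of $P$ alone says nothing. The correct statement --- a \emph{positive} weakly compact $P\colon E\to F$ has weakly compact linearization $P^\otimes$ provided $F$ has order continuous norm --- is \cite[Theorem 4.1]{libu}, and this is precisely where the order-continuity hypothesis in item (b) is consumed; your argument never invokes that hypothesis, which is the telltale sign of the gap. With that citation inserted, your (a)$\Rightarrow$(b) goes through. Note that your (c)$\Rightarrow$(d) is unaffected: there you only use the trivial direction ($P^\otimes$ weakly compact $\Rightarrow$ $P$ weakly compact, since $P(B_E)=P^\otimes(\{\otimes^k x : x\in B_E\})$ and $\norma{\otimes^k x}_{s,|\pi|}\le\norma{x}^k$), and your Krein-based proof that $T$ is weakly compact is, if anything, cleaner than the paper's appeal to \cite{libu} at that point.

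Everything else is correct and in part genuinely different from the paper. Your diagonal lemma is sound: $R\mapsto x^{\ast\ast}\circ\widetilde P_R$ is bounded for the regular norms (one gets $\norma{x^{\ast\ast}\circ\widetilde P_R}_r\le\norma{x^{\ast\ast}}\,\norma{T_R}_r=\norma{x^{\ast\ast}}\,\norma{R}_r$ using the Bu--Buskes identification of $\mathcal{P}^r(^kE)$ with symmetric regular multilinear forms), hence weak-to-weak continuous, and the induction closes as you describe. This lemma replaces the paper's machinery --- passing to the solid hull of an almost Dunford--Pettis set via \cite{chenli} and then applying \cite[Theorem 3.4 and Lemma 3.2]{shiwangbu} --- and is more self-contained, since it needs only that $\{x_n : n\in\N\}$ is Dunford--Pettis. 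Likewise, closing the cycle by (d)$\Rightarrow$(a) through Lemma \ref{lemapol} and Theorem \ref{teorema1} is simpler than the paper's chosen arrow (e)$\Rightarrow$(a). Finally, your auxiliary disjointness fact is true: if $f\wedge g=0$ and $x=x_1+x_2$ witnesses this at $x\ge 0$, then $\left(\otimes^k f\wedge\otimes^k g\right)(\otimes^k x)\le(\otimes^k f)(\otimes^k x-\otimes^k x_1)+g(x_1)^k$, with the first term at most $k\,f(x)^{k-1}f(x_2)$, and general positive elements of $\widehat{\otimes}_{s,|\pi|}^k E$ are dominated by sums of diagonal tensors $\otimes^k x_i$ with $x_i\ge 0$. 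But you can avoid proving it altogether: the paper instead invokes \cite[Lemma 3.2]{wangshibu}, which, from the positive weakly null $(P_n)_n$ and the disjoint $(x_n)_n$, directly produces a disjoint weakly null $(Q_n)_n$ with $Q_n(x_n)=P_n(x_n)$, after which (e) (indeed even (f)) applies.
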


\begin{proof} The implications (b) $\Rightarrow$ (c), (d) $\Rightarrow$ (e) and (f) $\Rightarrow$ (e) are immediate.

    (a) $\Rightarrow$ (b) Supposing by way of contradiction that (b) fails, there exist a Banach lattice $F$ with order continuous norm, a positive weakly compact $k$-homogeneous polynomial $P \colon E \to F$ and a disjoint weakly $p$-summable sequence $(x_n)_n$ in $E$ such that $\lim\limits_{n\to \infty} P(x_n) \neq 0$. Without loss of generality, we may assume that $\norma{P(x_n)} \geq \varepsilon$ for every $n \in \N$ and some  $\varepsilon>0$. So, for each $n \in \N$, there exists $y_n^\ast \in S_{F^\ast}$ such that $|y_n^\ast(P(x_n))| = \norma{P(x_n)} \geq \varepsilon.$ Since $P\colon E \to F$ is a positive weakly compact polynomial and $F$ has order continuous norm, we obtain from \cite[Theorem 4.1]{libu} that $P^\otimes\colon \widehat{\otimes}_{s, |\pi|}^k E \to F$ is a weakly compact linear operator, hence its adjoint $(P^\otimes)^\ast \colon F^\ast \to \left ( \widehat{\otimes}_{s, |\pi|}^k E \right )^\ast$ is weakly compact as well. Therefore, there exists a subsequence $(y_{n_j}^\ast)$ of $(y_n^\ast)_n$ such that $((P^\otimes)^\ast(y_{n_j}^\ast))_j$ is weakly convergent in $\left ( \widehat{\otimes}_{s, |\pi|}^k E \right )^\ast$. By using the identification (\ref{omku}) between $\left ( \widehat{\otimes}_{s, |\pi|}^k E \right )^\ast$ and $\mathcal{P}^r(^k E)$, there exists $Q \in \mathcal{P}^r(^k E)$ such that
    $$(y_{n_j}^\ast \circ P)^\otimes = y_{n_j}^\ast \circ P^\otimes = (P^\otimes)^\ast(y_{n_j}^\ast) \cvf Q^\otimes.$$
Applying the identification in the other direction we get that $(y_{n_j}^\ast \circ P - Q)_j$ is a weakly null sequence in $\mathcal{P}^r(^k E)$.

Using the disjoint $DPP_p$ of $E$, the set $\{ x_n : n \in \N \}$ is almost Dunford-Pettis, consequently $A := \sol{\{x_n : n \in \N \}}$ is a solid almost Dunford-Pettis subset of $E$ (see the proof of the theorem in \cite[p.\,112]{chenli}). By \cite[Theorem 3.4]{shiwangbu} we have $\sup\limits_{z \in A} |Q_j(z)| \longrightarrow 0$ for every disjoint weakly null sequence $(Q_j)_j$ in $\mathcal{P}^r(^k E)$. This implies that $Q_j(z_j) \longrightarrow 0$ for every disjoint sequence $(z_j)_j$ in $ A$ and every disjoint weakly null sequence $(Q_j)_j$ in $ \mathcal{P}^r(^k E)$. From \cite[Lemma 3.2]{shiwangbu} it follows that $Q_j(z_j) \longrightarrow 0$ for every disjoint sequence $(z_j)_j$ in $ A$ and every weakly null sequence $(Q_j)_j$ in $ \mathcal{P}^r(^k E)$. Since $(x_{n_j})_j$ is a disjoint sequence contained in $A$, we have $\lim\limits_{j \to \infty} (y_{n_j}^\ast \circ P - Q)(x_{n_j}) = 0$. Finally, using that $(x_{n_j})_{j}$ is, in particular, a weakly null Dunford-Pettis sequence in $E$, we get from Lemma \ref{lemapol2} that
    $Q(x_{n_j}) \longrightarrow 0$. Therefore,
    $$ \varepsilon \leq |y_{n_j}^\ast (P(x_{n_j}))| \leq |(y_{n_j}^\ast \circ P - Q)(x_{n_j})| + |Q(x_{n_j})| \longrightarrow 0,  $$
a contradiction that completes the proof of (a) $\Rightarrow$ (b).

%  : Take $F = c_0$.

   %%% (c) $\Rightarrow$ (a): Let $(x_n)_n \subset E$ be a disjoint weakly $p$-summable sequence and let $(x_n^\ast)_n$ be positive weakly null sequence in $E$. For each $n \in \N$, letting $P_n(x) = (x_n^\ast (x))^k$ for every $x \in E$,    we have by following the same argument used in the proof of \cite[Theorem 3.1]{bombal} that $P_n^\otimes = x_n^\ast \otimes \overset{k}{\cdots} \otimes x_n^\ast$, $n \in \N$, defines a weakly null sequence in $\left ( \widehat{\bigotimes}_{k, s, \pi} E\right )^\ast$. Consequently, $T(x) = (P_i^\otimes (x))_i$ defines a weakly compact operator from $widehat{\bigotimes}_{k, s, \pi} E$ to $c_0$, and so the $k$-homogeneous polynomial $P: E \to c_0$ such that $P^\otimes = T$ is weakly compact. Moreover, since $(x_n^\otimes)_n$ is a positive sequence, $P$ is a positive polynomial, and by the assumption $\lim_{n \to \infty} P(x_n)_n = 0$. Therefore    $$ |P_n(x_n)_n| = |P_n^\otimes (\theta_k(x_n)_n)| \leq \sup_{i \in \N} |P_i^\otimes (\theta_k(x_n)_n)| = \norma{T(\theta_k(x_n)_n)}= \norma{P(x_n)_n} \to 0. $$

 (c) $\Rightarrow$ (d) Let $(x_n)_n$ be a disjoint weakly $p$-summable sequence  in $E$ and let $(P_n)_n$ be a positive weakly null sequence in  $\mathcal{P}^r(^k E)$. Considering the identification (\ref{omku}) again % between $\mathcal{P}^r(^k E)$ and $\left ( \widehat{\otimes}_{s, |\pi|}^k E \right )^\ast$,
 we have that $(P_n^\otimes)_n$ is a positive weakly null sequence in $\left ( \widehat{\otimes}_{s, |\pi|}^k E \right )^\ast$. So,
    $T(z) = (P_i^\otimes (z))_i$ defines a positive weakly compact operator from $\widehat{\otimes}_{s, |\pi|}^k E $ to $c_0$. Hence the  positive $k$-homogeneous polynomial $P\colon E \to c_0$ such that $P^\otimes = T$ is a positive weakly compact polynomial by \cite[Theorem 4.1]{libu}. The assumption gives $\lim\limits_{n \to \infty} P(x_n) = 0$, from which we get
    $$ |P_n(x_n)| = |P_n^\otimes (\otimes^k x_n)| \leq \sup_{i \in \N} |P_i^\otimes (\otimes^k x_n)| = \norma{T(\otimes^k x_n)}= \norma{P(x_n)} \longrightarrow 0. $$

 %   (d) $\Rightarrow$ (e): Immediate.

    (e) $\Rightarrow$ (f) Let $(x_n)_n$ be a disjoint weakly $p$-summable sequence in $E$ and let $(P_n)_n$ be a  disjoint weakly null sequence in $\mathcal{P}^r(^k E)$. By \cite[Proposition 2.2]{zeefou} we obtain that $(|x_n|)_n$ is a disjoint weakly $p$-summable sequence in $E$ and by  \cite[Proposition 1.3]{wnukdual} we know that $(|P_n|)_n$ is a positive disjoint weakly null sequence in $\mathcal{P}^r(^k E)$. The assumption gives $|P_n|(|x_n|)$, from which it follows that $|P_n(x_n)| \leq |P_n|(|x_n|) \longrightarrow 0.$

  %  (f) $\Rightarrow$ (e): Immediate.

   (e) $\Rightarrow$ (a) Let $(x_n)_n$ be a disjoint weakly $p$-summable sequence in $E$ and let $(x_n^\ast)_n$ be positive  weakly null sequence in $E^*$. For each $n \in \N$, letting $P_n(x) = (x_n^\ast (x))^k$ for every $x \in E$,
    we have by Lemma \ref{lemapol} that $(P_n^\otimes)_n = (\otimes^k x_n^\ast)_n$ is a positive weakly null sequence in $\left ( \widehat{\otimes}_{s, |\pi|}^k E\right )^\ast$. Using once again that the correspondence (\ref{omku}) is an isometric isomorphism and a lattice homomorphism, $(P_n)_n$ is a positive weakly null sequence in $\mathcal{P}^r(^k E)$. By \cite[Lemma 3.2]{wangshibu} there exists a disjoint weakly null sequence $(Q_n)_n$ in $ \mathcal{P}^r(^k E)$ such that $Q_n(x_n) = P_n(x_n)$ for every $n \in \N$. The assumption gives
    $$ (x_n^\ast (x_n))^k = P_n(x_n) = Q_n(x_n)\longrightarrow 0, $$
therefore $E$ has the disjoint $DPP_p$ by Theorem \ref{teorema1}.
%%    So far we proved that (a), (b), (c), (d) and (e) are equivalent.   Since (f) $\Rightarrow$ (e) is immediate, it suffices us to check that (e) $\Rightarrow$ (f). So, let $(x_n)_n \subset E$ be a disjoint weakly $p$-summable sequence and let $(P_n) \subset \mathcal{P}^r(^k E)$ be a disjoint weakly null sequence. By \cite[Proposition 2.2]{zeefou} and \cite[Proposition 1.3]{wnukdual}, we obtain, respectively, that $(|x_n|)_n$ is a disjoint weakly $p$-summable sequence in $E$ and that $(|P_n|)_n$ is a positive disjoint weakly null sequence in $\mathcal{P}^r(^k E)$. So, by the assumption $|P_n|(|x_n|)$, and consequently $|P_n(x_n)_n| \leq |P_n|(|x_n|) \to 0.$
\end{proof}

In order to establish our last result, recall that a sequence  $(P_n)_n$ of scalar-valued regular $k$-homogeneous polynomials on $E$ is said to be weak* null if $(P_n^\otimes)_n$ is a weak* null sequence in $\left ( \widehat{\otimes}_{s, |\pi|}^k E\right )^\ast$, or, equivalently, if $P_n(x) \longrightarrow 0$ for every $x \in E$ (see \cite[Lemma 4.1]{wangshibu}).

\begin{theorem} \label{teopol2}
            For a $\sigma$-Dedekind complete Banach lattice $E$ and a positive integer $k$, the following statements are equivalent. \\
    {\rm (a)} $E$ has the disjoint $DP^*P_p$.\\
    {\rm (b)} For any Banach lattice $F$ such that $B_{F^\ast}$ is sequentially weak* compact, every positive $k$-homogeneous polynomial $P\colon E \to F$ is disjoint $p$-convergent. \\
    {\rm (c)} Every positive $k$-homogeneous polynomial $P \colon E \to c_0$ is disjoint $p$-convergent. \\
    {\rm (d)} Every regular $k$-homogeneous polynomial $P \colon E \to c_0$ is disjoint $p$-convergent. \\
    {\rm (e)} For every disjoint weakly $p$-summable sequence $(x_n)_n$ in $E$ and every weak* null sequence $(P_n)_n$ in $\mathcal{P}^r(^k E)$, it holds $P_n(x_n)\longrightarrow 0$. \\
    {\rm (f)} For every disjoint weakly $p$-summable sequence $(x_n)_n$ in $E$ and every disjoint weak* null sequence $(P_n)_n$ in $ \mathcal{P}^r(^k E)$, it holds $P_n(x_n)\longrightarrow 0$. \\
    {\rm (g)} For every disjoint weakly $p$-summable sequence $(x_n)_n$ in $E$ and every positive disjoint weak* null sequence $(P_n)_n$ in $\mathcal{P}^r(^k E)$, it holds $P_n(x_n)\longrightarrow 0$. \\
    {\rm (h)} For every disjoint weakly $p$-summable sequence $(x_n)_n$ in $ E$ and every positive weak* null sequence $(P_n)_n$ in $ \mathcal{P}^r(^k E)$, it holds $P_n(x_n)\longrightarrow 0$.
\end{theorem}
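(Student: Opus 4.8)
The plan is to run a cycle of implications parallel to the one in Theorem \ref{teopol1}, exploiting the $\sigma$-Dedekind completeness of $E$ to feed in the weak* characterizations of Theorem \ref{teorema3} and the weak* polynomial machinery of \cite{wangshibu}. Several links are immediate from inclusions between the relevant classes of polynomial sequences: (e)$\Rightarrow$(f)$\Rightarrow$(g), (e)$\Rightarrow$(h)$\Rightarrow$(g), and (b)$\Rightarrow$(c) (take $F=c_0$, whose dual ball $B_{\ell_1}$ is sequentially weak* compact since $c_0$ is separable). Moreover (d)$\Rightarrow$(c) because positive polynomials are regular, while (c)$\Rightarrow$(d) follows by writing a regular polynomial into $c_0$ as a difference of two positive ones and using that a difference of disjoint $p$-convergent maps is disjoint $p$-convergent. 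It then suffices to prove the four substantial links (a)$\Rightarrow$(e), (e)$\Rightarrow$(b), (c)$\Rightarrow$(h) and (g)$\Rightarrow$(a), which close the chain (a)$\Rightarrow$(e)$\Rightarrow$(b)$\Rightarrow$(c)$\Rightarrow$(h)$\Rightarrow$(g)$\Rightarrow$(a), with (f) and (d) absorbed along the way.

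For (c)$\Rightarrow$(h) I would argue exactly as in (c)$\Rightarrow$(d) of Theorem \ref{teopol1}, but without needing weak compactness: given a disjoint weakly $p$-summable $(x_n)_n$ and a positive weak* null $(P_n)_n$ in $\mathcal{P}^r(^kE)$, the identification (\ref{omku}) makes $(P_n^\otimes)_n$ a positive weak* null sequence in $\left(\widehat{\otimes}_{s,|\pi|}^k E\right)^\ast$, so $T(z)=(P_n^\otimes(z))_n$ defines a positive bounded operator into $c_0$; the positive polynomial $P\colon E\to c_0$ with $P^\otimes=T$ is disjoint $p$-convergent by (c), and $|P_n(x_n)|\le\sup_i|P_i^\otimes(\otimes^k x_n)|=\norma{P(x_n)}_\infty\longrightarrow 0$. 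For (g)$\Rightarrow$(a) I would establish condition (e) of Theorem \ref{teorema3}: given a disjoint weakly $p$-summable $(x_n)_n$ and a positive weak* null $(x_n^\ast)_n$ in $E^\ast$, set $P_n=(x_n^\ast)^k$, which is a positive weak* null sequence in $\mathcal{P}^r(^kE)$ by the remark preceding the theorem (as $(x_n^\ast(x))^k\longrightarrow 0$ for each $x$) together with the fact that (\ref{omku}) is a lattice homomorphism. Replacing $(P_n)_n$ by a positive disjoint weak* null sequence $(Q_n)_n$ with $Q_n(x_n)=P_n(x_n)$ (the weak* analogue of the disjointification \cite[Lemma 3.2]{wangshibu} used in Theorem \ref{teopol1}) and invoking (g) gives $(x_n^\ast(x_n))^k=Q_n(x_n)\longrightarrow 0$, whence $x_n^\ast(x_n)\longrightarrow 0$ and $E$ has the disjoint $DP^\ast P_p$ by Theorem \ref{teorema3}.

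The implication (e)$\Rightarrow$(b) I would prove by contradiction, mirroring (a)$\Rightarrow$(b) of Theorem \ref{teorema2}. If a positive $P\colon E\to F$ (with $B_{F^\ast}$ sequentially weak* compact) failed to be disjoint $p$-convergent, I would pick a disjoint weakly $p$-summable $(x_n)_n$ and functionals $y_n^\ast\in S_{F^\ast}$ with $|y_n^\ast(P(x_n))|\ge\varepsilon$, pass to $y_{n_j}^\ast\cvfe y^\ast$, and note via (\ref{omku}) that $(y_{n_j}^\ast\circ P-y^\ast\circ P)_j$ is a weak* null sequence of regular polynomials because $(P^\otimes)^\ast$ is weak*-weak* continuous. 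Writing $Q=y^\ast\circ P$, condition (e) yields $(y_{n_j}^\ast\circ P-Q)(x_{n_j})\longrightarrow 0$. Since (e) applied to positive weakly null (hence weak* null) polynomial sequences returns condition (d) of Theorem \ref{teopol1}, $E$ has the disjoint $DPP_p$, so $(x_{n_j})_j$ is a weakly null Dunford-Pettis sequence and Lemma \ref{lemapol2} forces $Q(x_{n_j})\longrightarrow 0$; combining these gives $\varepsilon\le|y_{n_j}^\ast(P(x_{n_j}))|\longrightarrow 0$, a contradiction.

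The crux is (a)$\Rightarrow$(e). Here I would follow the template of (a)$\Rightarrow$(b) in Theorem \ref{teopol1} transposed to the weak* setting: $\sigma$-Dedekind completeness and Theorem \ref{teorema3} give that $\{x_n:n\in\N\}$ is almost limited, hence its solid hull $A$ is a solid almost limited set, and the weak* counterparts of \cite[Theorem 3.4]{shiwangbu} and \cite[Lemma 3.2]{shiwangbu} (available through the $wDP^\ast P$-machinery of \cite{wangshibu}) would yield $P_n(z_n)\longrightarrow 0$ for every disjoint sequence $(z_n)_n$ in $A$ and every weak* null $(P_n)_n$ in $\mathcal{P}^r(^kE)$; taking $z_n=x_n$ then gives $P_n(x_n)\longrightarrow 0$. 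I expect the main obstacle to be precisely this transfer of the ``uniform convergence on solid almost limited sets'' results from the weakly null regime of \cite{shiwangbu} to the weak* null regime — equivalently, verifying that the embedding $x\mapsto\otimes^k x$ sends the almost limited set $\{x_n\}$ to an almost limited subset of $\widehat{\otimes}_{s,|\pi|}^k E$ — which is exactly where $\sigma$-Dedekind completeness and the weak* lattice results are indispensable, and which is not needed in the weakly null Theorem \ref{teopol1}.
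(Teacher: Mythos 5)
Your chain fails to close at its entry point (a)$\Rightarrow$(e), and the obstacle you yourself flag there is a genuine gap, not a routine verification. The weak* analogues of \cite[Theorem 3.4 and Lemma 3.2]{shiwangbu} that you invoke are not available, and the natural attempt to prove them breaks down: an almost limited set only controls \emph{disjoint} weak* null sequences of functionals, whereas (e) concerns arbitrary weak* null sequences in $\mathcal{P}^r(^kE)$. At the linear level, Theorem \ref{teorema3} bridges this divide by disjointifying against $(x_n)_n$ via \cite[Ex.\,22, p.\,77]{alip} and then applying \cite[Lemma 2.2]{chen} — but that lemma requires $\sigma$-Dedekind completeness of the lattice in which the functionals live. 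At the polynomial level the disjointification takes place in $\left(\widehat{\otimes}_{s,|\pi|}^k E\right)^\ast$, i.e., one would need $\widehat{\otimes}_{s,|\pi|}^k E$ to be $\sigma$-Dedekind complete, which does not follow from $\sigma$-Dedekind completeness of $E$; your reformulation (that $x \mapsto \otimes^k x$ sends the almost limited set $\{x_n : n \in \N\}$ to an almost limited subset of the tensor product) is likewise unproven and, even if true, would not suffice, since general weak* null sequences need not converge uniformly on almost limited sets. Note also that this difficulty cannot be waved away as a transfer of the weakly null case: in Theorem \ref{teopol1} the weakly null machinery of \cite{shiwangbu} exists precisely because weak compactness of the polynomials is assumed there, while in (b)--(d) of the present theorem no weak compactness is available.

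The paper sidesteps all of this by entering the cycle through (a)$\Rightarrow$(b), proved by induction on the degree $k$: for a positive $(k+1)$-homogeneous $P$ with associated symmetric operator $T_P$, the induction hypothesis applied to $Q_x(y)=T_P(|x|,y,\overset{k}{\dots},y)$ gives $T_P(x,x_n,\overset{k}{\cdots},x_n)\longrightarrow 0$ for each $x$, so that, choosing $\varphi_n \in S_{F^\ast}$ norming $P(x_n)$, the \emph{linear} operator $T(x)=(\varphi_i(T_P(x,x_i,\overset{k}{\cdots},x_i)))_i$ into $c_0$ is well defined and, by Theorem \ref{teorema2}, disjoint $p$-convergent, yielding $\norma{P(x_n)} = \norma{T(x_n)}_\infty \longrightarrow 0$. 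Condition (e) is then reached through (b)$\Rightarrow$(c)$\Rightarrow$(d)$\Rightarrow$(e), where (d)$\Rightarrow$(e) needs only that weak* null polynomial sequences are pointwise null, so that $P(x)=(P_i(x))_i$ defines a regular polynomial into $c_0$ — no solid hulls, no almost limited sets. For what it is worth, your remaining links are sound: (c)$\Rightarrow$(h) and (g)$\Rightarrow$(a) are essentially the paper's arguments (in (g)$\Rightarrow$(a) the disjointification is easier than you suggest, since $0 \leq \varphi_n \leq P_n^\otimes$ with $P_n^\otimes$ weak* null gives weak* nullity of $(\varphi_n)_n$ directly, with no lemma from \cite{wangshibu} needed), and your (e)$\Rightarrow$(b) via Lemma \ref{lemapol2} and the disjoint $DPP_p$ is a correct and genuinely different alternative to the paper's induction — but it is downstream of the broken implication, so the proposal as a whole does not establish the theorem.
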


\begin{proof}  The implications (b) $\Rightarrow$ (c), (e) $\Rightarrow$ (f), (f) $\Rightarrow$ (g), (e) $\Rightarrow$ (h) and (h) $\Rightarrow$ (g) are immediate.

    (a) $\Rightarrow$ (b) The case $k=1$ follows from Theorem \ref{teorema2}. Assume that (b) holds for $k > 1$ and le us prove that it holds for $k+1$. To do so, let $P\colon E \to F$ be a positive $(k+1)$-homogeneous polynomial and let $(x_n)_n$ be a disjoint weakly $p$-summable sequence in $E$. For each $x \in E$, define $Q_x\colon E \to F$ by $Q_x(y) = T_P(|x|, y, \overset{k}{\dots}, y)$, where $T_P \colon E^{k+1}\to F$ denotes the positive symmetric $(k+1)$-linear operator associated to $P$. Thus, $Q_x$ is a positive $k$-homogeneous polynomial. By \cite[Proposition 2.2]{zeefou}, $(|x_n|)_n$ is a disjoint weakly $p$-summable sequence, so the assumption gives $\lim\limits_{n \to \infty} Q_x(|x_n|) = 0$. In particular, using that $|T_P(x, x_n, \overset{k}{\cdots}, x_n)| \leq T_P(|x|, |x_n|, \overset{k}{\cdots}, |x_n|)$, we have that
    $$ \norma{T_P(x, x_n, \overset{k}{\cdots}, x_n)} \leq \norma{T_P(|x|, |x_n|, \overset{k}{\cdots}, |x_n|)} = \norma{Q_x(|x_n|)} \longrightarrow 0 $$
    for every $x \in E$. Proceeding as in the proof of \cite[Proposition 3.1]{cargalou}, we can choose a sequence $(\varphi_n)_n$ in $S_{F^\ast}$ such that $\varphi_n(P(x_n)) = \norma{P(x_n)}$ for every $n \in \N$. So,
     $$T\colon E \to c_0~,~T(x) = (\varphi_i(T_{P}(x, x_i, \overset{k}{\cdots}, x_i)))_i,$$
     is a well defined bounded linear operator. The disjoint $DP^\ast P_p$ of $E$, together with Theorem \ref{teorema2}, yields that $T$ is disjoint $p$-convergent operator. This implies that $\lim\limits_{n \to \infty} T(x_n)_n = 0$. For every $n \in \N$,
    $$ \norma{P(x_n)} = \varphi_n(P(x_n)) = \varphi_n(T_P(x_n, x_n, \overset{k}{\cdots}, x_n)) = \sup_{i \in \N} |\varphi_i(T_{P}(x_n, x_i, \overset{k}{\cdots}, x_i))| = \norma{T(x_n)}_\infty,$$
therefore $P(x_n)\longrightarrow 0$.

    (c) $\Rightarrow$ (d) Given a regular $k$-homogeneous polynomial $P \colon E \to c_0$ , we may write $P = P^+ - P^-$, where $P^+,P^- \colon E \to c_0$ are positive $k$-homogeneous polynomials. Applying (c) for $P^+$ and $P^-$ one obtains (d) easily. %If $(x_n)_n$ is a disjoint weakly $p$-summable sequence, $\lim_{n \to \infty} P(x_n)_n = 0$ and $\lim_{n \to \infty} P^-(x_n) = 0$, and consequently
%    $$ \lim_{n \to \infty} P(x_n) = \lim_{n \to \infty} P^+(x_n) - \lim_{n \to \infty} P^-(x_n) = 0.$$

    (d) $\Rightarrow$ (e) Let $(x_n)_n$ be a disjoint weakly $p$-summable sequence in $E$ and let $(P_n)_n$  be a weak* null sequence in $\mathcal{P}^r(^k E)$. Defining $P(x) = (P_i(x))_i$ for every $x \in E$ we obtain a well defined regular $k$-homogeneous polynomial  $P\colon E \to c_0$. By assumption we have $\lim\limits_{n \to \infty} P(x_n) = 0$. Thus,
    $$ |P_n(x_n)| \leq \sup_{i \in \N} |P_i(x_n)| = \norma{P(x_n)}_\infty \longrightarrow 0.  $$

    (g) $\Rightarrow$ (a) Let $(x_n)_n$ be a disjoint weakly $p$-summable sequence in $E$ and let $(x_n^\ast)_n$ be a positive weak* null sequence in $E^*$. On the one hand, letting $P_n(x) = (x_n^\ast (x))^k$ for every $x \in E$ and $n \in \N$,
    we have that $(P_n^\otimes)_n = (\otimes^k x_n^\ast)_n$ is a positive
    weak* null sequence in $\left ( \widehat{\otimes}_{s, |\pi|}^k E\right )^\ast$. On the other hand, $(\otimes^k x_n)_n$ is a disjoint sequence in $\widehat{\otimes}_{s, |\pi|}^k E$ because $(x_n)_n$ is disjoint, and by \cite[Ex.\,22, p.\,77]{alip} there exists a positive disjoint sequence $(\varphi_n)_n$ in $\left ( \widehat{\otimes}_{s, |\pi|}^k E\right )^\ast$ such that $\varphi_n \leq P_n^\otimes$ and $\varphi_n(\otimes^k x_n) = P_n^\otimes (\otimes^k x_n)$ for every $n \in \N$. Denoting by $Q_n\colon E \to \R$ the positive $k$-homogeneous polynomial whose linearization is $\varphi_n$, we get that
    $(Q_n)_n$ is a positive disjoint weak* null sequence in $\mathcal{P}^r(^k E)$. By assumption,
    $$ (x_n^\ast(x_n))^k = P_n(x_n) = P_n^\otimes (\otimes^k x_n) = \varphi_n(\otimes^k x_n) = Q_n(x_n)\longrightarrow 0. $$
    By Theorem \ref{teorema3} we conclude that $E$ has the disjoint $DP^\ast P_p$.
\end{proof}

\begin{remark}\rm
(i) The assumption of $E$ being $\sigma$-Dedekind complete in Theorem \ref{teopol2} was used only to prove (g)$\Rightarrow$(a). All other implications hold for every Banach lattice.\\
(ii) Theorem \ref{teopol1}(a) and Theorem \ref{teopol2}(a) make clear that each of the other conditions of both theorems holds for some $k \in \N$ if and only if it holds for every $k \in \N$.
\end{remark}

\bigskip

\noindent G. Botelho and V. C. C. Miranda\\
Faculdade de Matem\'atica\\%~~~~~~~~~~~~~~~~~~~~~~Instituto de Matem\'atica e Estat\'istica\\
Universidade Federal de Uberl\^andia\\%~~~~~~~~ Universidade de S\~ao Paulo\\
38.400-902 -- Uberl\^andia -- Brazil\\%~~~~~~~~~~~~ 05.508-090 -- S\~ao Paulo -- Brazil\\
e-mails: botelho@ufu.br, colferaiv@gmail.com % ~~~~~~~~~~~~~~~~~~~~~~~~~e-mail: veronica@ime.usp.br.

\medskip

\noindent J. L. P. Luiz\\
Instituto Federal do Norte de Minas Gerais\\
Campus de Ara\c cua\'i\\
39.600-000 -- Ara\c cua\'i -- Brazil\\
e-mail: lucasvt09@hotmail.com

\end{document}